\newtheorem{theorem}{Theorem}[section]
\newtheorem{proposition}{Proposition}[section]
\newtheorem{lemma}[theorem]{Lemma}
\newtheorem*{thm1.1}{\bf Theorem 1.1}
\newtheorem*{thm1.2}{\bf Theorem 1.2 (Unconditional)}
\newtheorem*{lem5.3}{\bf Lemma 5.3}
\theoremstyle{definition}
\newtheorem{remark}[]{{ \bf Remark:}}
\numberwithin{equation}{section}
\begin{document}
\title{Subconvexity bound for  $GL(2)$ L-functions: \lowercase{t}-aspect} 
\author{Ratnadeep Acharya, Sumit Kumar, Gopal Maiti and Saurabh Kumar Singh}

\address{ Stat-Math Unit,
Indian Statistical Institute, 
203 BT Road,  Kolkata-700108, INDIA.}

\email{ratnadeepacharya87@gmail.com}
\email{sumitve95@gmail.com}
\email{g.gopaltamluk@gmail.com}

\email{skumar.bhu12@gmail.com}

\subjclass[2010]{Primary 11F66, 11M41; Secondary 11F55}
\date{\today}

\keywords{Maass forms, Hecke eigenforms, Voronoi summation formula, Poisson summation formula.}

\begin{abstract}
Let $f $  be a holomorphic Hecke eigenform  or a Hecke-Maass cusp form for the full modular group $ SL(2, \mathbb{Z})$. In this paper we shall use circle method to prove the Weyl exponent for $GL(2)$ $L$-functions. We shall prove that 

\[
 L \left( \frac{1}{2} + it, f \right) \ll_{f, \epsilon} \left( 2 + |t|\right)^{1/3 + \epsilon}, 
\]  for any $\epsilon > 0.$
\end{abstract}
\maketitle 

\section{ Introduction }

Estimating the central values of $L$-functions is one of the most important problems in number theory. In this paper we shall deal with the $t$-aspect of sub-convexity bound for $GL(2)$ $L$-functions. Let $f $  be a holomorphic Hecke eigenform, or a Maass cusp form for the full modular group $ SL(2, \mathbb{Z})$ with normalised Fourier coefficient $\lambda_f(n)$. The $L$-series associated with  $f$ is given by
\[
L(s, f)= \sum_{n=1}^\infty \frac{\lambda_f(n)}{n^s} \ =  \prod_p \left( 1 -\lambda_f(p) p^{-s} + p^{-2s} \right)^{-1} \ \ \ (\Re s>1).
\]
It has been proved that the series $ L(s, f)$ extends to an entire function and satisfies a functional equation relating $s$ with $1-s$. The convexity problem in $t$-aspect deals with the size of $L(s, f)$ at the central line $\Re s = 1/2$.  The functional equation together with the Phragm{\' e}n–Lindel{\"o}f principle  and asymptotic of the Gamma functions  gives us the convexity bound, or the trivial bound,  $L(1/2+ it, f)\ll t^{1/2+ \epsilon}$. The sub-convexity problem is to obtain a bound of the form $L(1/2+ it, f) \ll t^{1/2 -\delta},$ for any $\delta>0.$ In this paper we shall prove the following theorem:

\begin{theorem} \label{main thm}
Let $f $  be either a holomorphic Hecke eigenform or a Maass cusp form for the full modular group $ SL(2, \mathbb{Z})$. On the central line $\sigma= 1/2$, we have the following Weyl bound
\[
L\left( \frac{1}{2} + it, f \right) \ll (|t|+2)^{ 1/3 +\epsilon} ,
\] for any $\epsilon >0$. 
\end{theorem}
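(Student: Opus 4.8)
The plan is to begin from an approximate functional equation, which writes $L(1/2+it,f)$ — up to a negligible error and an admissible $t^{\epsilon}$ — as a short combination of dyadic sums $S(N)=\sum_{n\ge 1}\lambda_f(n)\,n^{-it}\,V(n/N)$ with $V$ a fixed bump function and $N$ running over powers of $2$ up to $t^{1+\epsilon}$. Since the convexity bound is $S(N)\ll N^{1+\epsilon}$, to get the Weyl exponent it is enough to prove $S(N)\ll N^{1/2+\epsilon}t^{1/3+\epsilon}$. The range $N\ll t^{2/3}$ is immediate from $\sum_{n\sim N}|\lambda_f(n)|\ll N^{1+\epsilon}$, so the real work is the range $t^{2/3}\ll N\ll t^{1+\epsilon}$, and in particular the hardest point $N\asymp t$, where one needs $S(N)\ll t^{5/6+\epsilon}$.

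For the main range I would apply the Duke–Friedlander–Iwaniec delta method (possibly with Munshi's conductor–lowering refinement) in order to detach the arithmetic oscillation $\lambda_f(n)$ from the analytic oscillation $n^{-it}$: writing $S(N)=\sum_{n,m}\lambda_f(n)\,m^{-it}\,V(n/N)W(m/N)\,\delta(n-m)$ and expanding $\delta$ into additive characters to moduli $q\le Q$ with $Q\asymp \sqrt N$, one separates the $\lambda_f$-sum $\sum_n\lambda_f(n)e(an/q)(\cdots)$ from the $m$-sum $\sum_m m^{-it}e(-am/q)(\cdots)$. To the first I would apply the $GL(2)$ Voronoi summation formula; with $Q\asymp\sqrt N$ the delta-method integral contributes no essential oscillation to the $n$-weight, so the dual sum collapses to a $\widetilde n$-range of size $N^{\epsilon}$, at the cost of an archimedean Bessel transform of size $\asymp N$. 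To the $m$-sum I would apply Poisson summation modulo $q$ and analyse the resulting integral $\int x^{-it}e(-cx/q)W(x/N)\,dx$ by stationary phase: for $N\asymp t$ the stationary point falls in the support only for $O(t^{\epsilon})$ values of the dual variable $c$, each giving a factor of size $\asymp\sqrt t$ times a phase of the form $\tfrac{t}{2\pi}\log(q-a)+\text{(terms independent of $a$)}$.

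After these two steps, $S(N)$ becomes — up to bookkeeping factors that, with $Q\asymp\sqrt N$ and $N\asymp t$, multiply out to size $\asymp 1$ — a sum over $q\sim Q$ and over the short $\widetilde n$-range of an exponential sum in $a\bmod q$ of the shape $\sum_{a\,(q)}^{*}e\!\left(-\overline a\,\widetilde n/q+\psi(a,q)\right)$, with $\psi(a,q)$ the archimedean phase above. The core of the proof is to extract square-root-type cancellation from this sum. I would do this via the reciprocity relation $\overline a/q\equiv -\overline q/a+1/(aq)\pmod 1$, which swaps the roles of $a$ and $q$ and turns it into a sum whose arithmetic modulus is the summation variable; one can then complete and apply the Weil bound for Kloosterman sums (alternatively, a Cauchy–Schwarz in $\widetilde n$ followed by a further Poisson application, dualising the $\widetilde n$-sum into a Kloosterman-type sum whose diagonal dominates). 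A bound of $q^{1/2+\epsilon}$ for these character sums, fed back into the $q$-sum, then yields $S(N)\ll N^{1/2+\epsilon}t^{1/3+\epsilon}$ after choosing $Q$ to balance the two sides, and summing over the dyadic pieces and the remaining ranges of $N$ finishes the theorem.

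I expect the main obstacle to be precisely this final step: showing that the interaction between the non-polynomial phase $\psi(a,q)$ manufactured by the stationary-phase analysis and the genuine additive character $e(-\overline a\,\widetilde n/q)$ still leaves Weil-quality cancellation, and — if the Cauchy–Schwarz/second-Poisson route is taken — bounding the off-diagonal contribution so it does not swamp the diagonal. A secondary, but genuinely delicate, point is the careful handling of all the archimedean integrals (repeated integration by parts to discard the negligible ranges, explicit stationary phase with tracked phases for the surviving ones), and verifying that the choice $Q\asymp\sqrt N$ is exactly the balance that produces the exponent $1/3$ rather than something weaker.
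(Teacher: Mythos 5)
Your route is not the paper's: the paper manufactures its delta symbol from the Petersson trace formula over an auxiliary family (weight $k\sim K$, level $q\sim Q$, odd characters $\psi$), then applies the functional equation of the Rankin--Selberg $L$-function $L(s,F\otimes f)$, a second Petersson application, executes the $k$- and $\psi$-sums by stationary phase, and finishes with Poisson in $n$, Cauchy--Schwarz, and Poisson in $m$. What you describe is instead the Kloosterman/DFI delta-method approach, i.e.\ essentially the strategy of Aggarwal--Singh \cite{AS}, which the paper cites as an earlier proof of the Weyl bound. That route can work, but your sketch is missing the two ingredients that make it work, so as written there is a genuine gap rather than merely a different proof.

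First, the decisive cancellation step is asserted, not available. After Voronoi in $n$ and Poisson in $m$ you arrive at a hybrid sum $\sum_{a\,(q)}^{*} e\!\left(-\overline a\,\widetilde n/q+\psi(a,q)\right)$ in which $\psi(a,q)\approx \tfrac{t}{2\pi}\log(\cdots a\cdots)$ is an \emph{archimedean} phase of conductor $\asymp t\gg q$. Reciprocity $\overline a/q\equiv-\overline q/a+1/(aq)$ rearranges the algebraic part but does nothing to $\psi$, and the Weil bound applies to algebraic exponential sums modulo $q$, not to sums twisted by a smooth phase whose total variation over $a\,(\mathrm{mod}\ q)$ is of size $t$; so ``Weil-quality cancellation'' cannot be invoked here, and you yourself flag this as the main obstacle without supplying an argument. (Relatedly, your count of surviving dual frequencies after Poisson in $m$ is off: with $Q\asymp\sqrt N$ and $N\asymp t$ the stationary point lies in the support for $\asymp qt/N\asymp q$ values of $c$, not $O(t^{\epsilon})$; it is $O(t^{\epsilon})$ only per residue class $a\equiv c\ (\mathrm{mod}\ q)$.) Second, and structurally, your sketch has no analogue of the extra parameter $K$: in the paper the exponent $1/3$ comes from summing over weights $k\sim K$ and optimizing $K=t^{1/3}$ (the off-diagonal saving $\min\{Q^2t/N,\;Q^2K^3/N\}$), and in \cite{AS} it comes from Munshi's conductor-lowering integral with the same optimization. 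The plain delta method with $Q\asymp\sqrt N$ and no such device does not balance to $1/3$; the savings from Voronoi and Poisson alone are exactly what the conductor-lowering trick was invented to supplement. To complete your approach you would need to insert the conductor-lowering integral (or an equivalent auxiliary average), take $Q\asymp\sqrt{N/K}$ with $K\asymp t^{1/3}$, and replace the hoped-for Weil step by the Cauchy--Schwarz plus second Poisson analysis in which the diagonal and off-diagonal contributions are estimated separately --- at which point you have reproduced \cite{AS}, not a shortcut.
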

\begin{remark}
 The method of the proof also works for any congruence subgroup $\Gamma_0(N)$, where $N$ is any natural number (not necessarily square free).
\end{remark}

 Let us  briefly recall the history of the $t$-aspect sub-convexity bound for $L$-functions.  The convexity bound for   the Riemann zeta function is given by 
\begin{equation} \label{conv for zeta}
\zeta \left( \frac{1}{2} + it \right) \ll t^{1/4 + \epsilon},\ \ \ \  (\epsilon> 0).
\end{equation}

  Lindel{\" o}f hypothesis asserts that the exponent $1/4 + \epsilon$ can be replaced by $\epsilon$. Sub-convexity bound for $\zeta(s)$ was first proved by G. H. Hardy and J. E. Littlewood, and H. Weyl independently.

It was first written down by E. Landau in a slightly refined form, and has been generalised to all Dirichlet $L$-functions. Since then it is a very hot topic for research. Many eminent mathematicians have worked on it and improved the exponent in \eqref{conv for zeta}. The latest bound is due to  J. Bourgain who proves the exponent $13/84$.

 The $t$-aspect  Weyl exponent  for  $GL(2)$ $L$-functions is expected to be $1/3.$ For holomorphic forms,   this was first proved by A. Good \cite{GOOD} using the spectral theory of automorphic functions. M. Jutila \cite{MJ} has given an alternative proof   based only on the functional properties of $L(s, f)$ and $L(s, f\otimes \chi)$, where $\chi$ is an additive character. The arguments used in his proof were flexible enough to be adopted for  the Maass cusp forms, as shown by   Meurman \cite{MERU1}, who proved the result for Maass cusp forms.  A. Good mean value estimate  itself was extended by  M. Jutila \cite{MJ1}  to prove the Weyl bound for Maass cusp forms, yet in another way. Using Kloosterman's circle method and conductor lowering trick introduced by R.Munshi, Aggarwal and Singh \cite{AS} proved the Weyl bound for  $GL(2)$ $L$-functions.

The aim of this paper is to use  $GL(2)$ circle method  to prove the Weyl bound for $GL(2)$ $L$-functions. This is the first instance where $GL(2)$ circle method is being used to obtain the Weyl bound. We carry out the suggestions of R.Munshi in this paper. We introduce one more layer in this technique by summing over the weights. This paper serves as a precursor to an upcoming paper of R.Munshi.

\section{Sketch of the proof}
To prove our theorem, we start with the following Fourier sum:
\begin{align}\label{Fourier sum}
 \mathcal{F} &=\sum_{k\sim K} W\left(\frac{k-1}{K}\right) \sideset{}{^\dagger}\sum_{ \psi (\textrm{mod}\ q)}\sum_{f\in H_k(q,\Psi)}\omega_f^{-1}\mathop{\sum \sum}_{m, \ell=1}^\infty \lambda_f(m)\lambda_F(m)\psi(\ell)  U \left(\frac{m \ell^2}{N} \right) \nonumber \\
 & \hspace{3cm}\times \sum_{n=1}^\infty \overline{\lambda_f(n)} n^{it} W \left(\frac{n}{N} \right),
 \end{align}
where U is a smooth bump function supported on the interval $[0.5,3]$ such that $U(x)\equiv 1$  for $x\in [1,2]$ and $U^{(j)}(x)\ll_j 1,$ for all $j \geq 1$. Estimating $\mathcal{F}$ trivially at this stage, we get $\mathcal{F}\ll QKN^2$.

{\bf Step  1:} 
On applying the Petersson Trace formula, we obtain 
$\mathcal{F}=\Delta + \mathcal{O}$,
where 
\begin{align*}
\Delta =\sum_{k\sim K} W\left(\frac{k-1}{K}\right) \sideset{}{^\dagger}\sum_{ \psi (\textrm{mod}\ q)}\mathop{\sum \sum}_{m, \ell=1}^\infty \lambda_F(m)\psi(\ell)  U \left(\frac{m \ell^2}{N} \right) m^{it} W \left(\frac{m}{N} \right),
 \end{align*} and 
\begin{align}\label{off diagonal}
 \mathcal{O} &=\sum_{k\sim K} W\left(\frac{k-1}{K}\right) \sideset{}{^\dagger}\sum_{ \psi (\textrm{mod}\ q)}\mathop{\sum \sum \sum}_{m, \ell ,n=1}^\infty \lambda_F(m) n^{it} \psi(\ell) W\left(\frac{n}{N}\right) U \left(\frac{m \ell^2}{N} \right)\notag \\
 & \hspace{2cm}  \times 2\pi i^{-k} \sum_{c=1}^\infty \frac{S_{\psi} (m,n,cq)}{cq} J_{k-1} \left(\frac{4\pi\sqrt{mn}}{cq}\right).
 \end{align}
 We observe that $|\Delta| \asymp KQ|S(N)|$.

 {\bf Step 2:}
  Next we evaluate the sum over $k$ in \eqref{off diagonal} and observe that $\mathcal{O}$ is negligibly small if $QK^{2}\gg Nt^{\epsilon}$. Hence we obtain  $S(N) \ll \frac{\mathcal{F}}{QK}$. Now our goal is to prove that $\mathcal{F}\ll QKN^{1/2}t^{1/3}$.

{\bf Step 3:}
Now we apply functional equation for $L(s, F\otimes f)$ in \eqref{Fourier sum}. We observe that the sum over $m$ in \eqref{Fourier sum} is given by 
\begin{align*}
\mathop{\sum \sum}_{m, \ell=1}^\infty \lambda_f(m)\lambda_F(m)\psi(\ell) U\left(\frac{m \ell^{2}}{N}\right)  &=  
\eta i^{-2k} \left( \frac{N}{\tilde{N}}\right)^{1/2} \epsilon_{\psi}^{2}\overline{\lambda_f(q^{2}) } \sum_{\mathcal{U}}\mathop{\sum \sum}_{m, \ell=1}^\infty \lambda_f(m)\lambda_F(m)  \\ & \hspace{1cm} \times W_1\left(\frac{m \ell^{2}}{\tilde{N}}\right)+ O(t^{-2018}),
\end{align*}
 where  $W_{1}^{(j)}(x)\ll_{j} t^{\epsilon j}$ and $\tilde{N} \asymp Q^2 K^4/N$.
This gives us the following expression of $\mathcal{F}$:

\begin{align} \label{newsum}
\mathcal{F} &=\eta  \left( \frac{N}{\tilde{N}}\right)^{1/2}  \sum_{k\sim K} i^{-2k} W\left(\frac{k-1}{K}\right)
 \sideset{}{^\dagger}\sum_{ \psi (\textrm{mod}\ q)}\epsilon_{\psi}^{2}
 \sum_{f\in H_k(q,\Psi)}\omega_f^{-1}\sum_{\nu=0}^{\infty}\mathop{\sum \sum}_{m^{\prime},  \ell=1}^\infty \lambda_f(m^{\prime})\overline{\lambda_F} (m) \overline{\psi}(\ell m^{\prime})    \notag\\
 &  \hspace{3cm}\times W_{1} \left(\frac{m^{\prime} q^{\nu} \ell^2}{\tilde{N}} \right) \sum_{n=1}^\infty \overline{\lambda_f(nq^{2+\nu})} n^{it} V \left(\frac{n}{N} \right).
\end{align}
This step gives us a saving of the size $(\frac{N}{\tilde{N}})^{1/2} = \frac{N}{QK^2}.$

{\bf Step 4:}
We again apply the Petersson Trace formula in \eqref{newsum} and obtain 
$\mathcal{F}=\textrm{diagonal}(\Delta_{1})+ \textrm{off diagonal}(\mathcal{O^{*}})$.
We observe that  diagonal($\Delta_{1}$) term vanishes and the dual off diagonal term is given by
\begin{align}\label{new off}
\mathcal{O^{*}} &= \left( \frac{N}{\tilde{N}}\right)^{1/2}     \sum_{k\sim K} i^{-2k} W\left(\frac{k-1}{K}\right) \sideset{}{^\dagger}\sum_{ \psi (\textrm{mod}\ q)}\epsilon_{\psi}^{2}\sum_{\nu=0}^{\infty} \mathop{\sum \sum}_{m^{'}, \ell=1}^\infty \lambda_F(m)\overline{\psi}(\ell m^{'})  W_{1} \left(\frac{m^{'} q^{\nu} \ell^2}{\tilde{N}} \right)  \notag \\
 &  \hspace{1cm}\times \sum_{n=1}^\infty  n^{it} V \left(\frac{n}{N} \right)2\pi i^{-k} \sum_{c=1}^\infty \frac{S_{\psi} (nq^{2+\nu},m^{'},cq)}{cq} J_{k-1} \left(\frac{4\pi\sqrt{m^{'}q^{\nu}n}}{c}\right) .
\end{align}

This step gives us a saving of the size $\frac{\sqrt{QK}}{C}$, where $C\backsim Q$. From now on, we shall estimate $\mathcal{O^{*}}$.

{\bf Step 5:}
 We  evaluate the sum over $k$ in \eqref{new off} using stationary phase integral and also evaluate the sum over $\psi$ in the resulting expression. This process gives us the following expression for $\mathcal{O^{*}}$:
\begin{align*}
\mathcal{O^{*}} &=\sqrt{\frac{N}{\tilde{N}}} \frac{\phi(q)}{q}\mathop{\sum \sum}_{m, \ell=1}^\infty \lambda_F(m)  U \left(\frac{m \ell^2}{\tilde{N}} \right) \sum_{n}n^{it} W \left(\frac{n}{N} \right) \sum_{c\ll \frac{Q}{\ell}} \frac{S(n,m;c)}{c} e\left(\pm \frac{\overline{cl}}{q}\pm\frac{\sqrt{nm}}{c} \right) .  
\end{align*} 
In this step, the sum over $k$ gives us a saving of size $\sqrt{K}$ and the sum over $\psi$ gives a saving of  size $\sqrt{Q}$. Thus, saving in this step is 
\begin{align*}
\frac{N}{QK^2}\frac{\sqrt{QK}}{\sqrt{C}}\sqrt{QK}=\frac{N}{K\sqrt{Q}}.
\end{align*}
Hence total saving at this stage is 
\begin{align*}
\frac{N}{\sqrt{Qt}}\frac{N}{K\sqrt{Q}}=\frac{N^2}{QK\sqrt{t}}.
\end{align*}

{\bf Step 6:} We now apply the Poisson summation formula to the sum over $n$. The initial length for $n$-sum is $N$. Here ``analytic conductor'' is of size $t$ and ``arithmetic conductor'' is of size $c$. Hence the dual length is supported on $\frac{ct}{N}$. Saving in this step is of size  $\frac{N}{\sqrt{Qt}}$.  We obtain the following bound for $\mathcal{O^{*}}$:   
\begin{align*}
\mathcal{O^{*}} &\ll \left( \frac{N}{\tilde{N}}\right)^{1/2}   \mathop{\sum \sum}_{m, \ell =1}^\infty |\lambda_F(m)\overline{\psi}(l ) W\left(\frac{ml^{2}}{\Tilde{N}}\right)|\ \ |\sum_{c\sim C} \sum_{n\ll\frac{ct}{N}} \frac{1}{c} e\left(\frac{-m \overline{n}}{c} \right)I(m,n,c)|.
\end{align*}

{\bf Step 7:}
We apply the Cauchy-Schwartz inequality to get rid of the Fourier coefficients. Opening the absolute value square and interchanging the sum over $m$  gives us the following expression:
\begin{align*} 
\mathcal{O^{*}} & \ll \left( \frac{N}{\tilde{N}}\right)^{1/2}     (\tilde{N})^{1/2}N  \left(\sum_{m\sim \tilde{N}} W(\frac{m}{\tilde{N}})\ \  |\sum_{c\sim C}  \sum_{n\ll\frac{c t}{N}} \frac{1}{c} e\left(\frac{-m \overline{n}}{c} \right)I(m,n,c)|^{2}  \right)^{1/2} \notag\\ 
&:=   N^{3/2}  (\mathcal{O}_2^\star)^\frac{1}{2},
\end{align*}
where \begin{align*}
\mathcal{O}_2^\star =\mathop{\sum \sum}_{c_{1},c_{2} \sim Q} \frac{1}{c_1 c_2} \mathop{\sum \sum}_{n_i \sim \frac{c_i t}{N}}  \sum_{m\sim \tilde{N}} e\left(\frac{-m \overline{n_{1}}}{c_{1}} +\frac{m \overline{n_{2}}}{c_{2}} \right)I(m,n_{1},c_{1})I(m,n_{2},c_{2}) U\left(\frac{m}{\tilde{N}}\right).
\end{align*}
We again apply the Poisson summation formula to sum over $m$.  ``Analytic conductor'' is of size $K^2$ and ``arithmetic conductor'' is of size $c_{1}c_{2}$. From the diagonal terms  we get a saving of size $\frac{Q^2 t}{N}$. From off diagonal terms we save $\frac{\tilde{N}}{K^2 \sqrt{c_{1}c_{2}}}$. Also we are able to save $\sqrt{c_{1}c_{2}}$  from the resulting congruence relation. Thus,  total savings in the off diagonal terms is of size  
\begin{align*}
\frac{\tilde{N}}{K^2 \sqrt{c_{1}c_{2}}}\sqrt{c_{1}c_{2}}=\frac{\tilde{N}}{K}=\frac{Q^2 K^3}{N}.
\end{align*}
Therefore,  total savings in sum over $m$ is of size $\min\left\lbrace\frac{Q^2 t}{N},\frac{Q^2 K^3}{N}\right\rbrace$.
Optimal choice of $K$ is given by $K=t^{1/3}$.  Hence, total saving from all of the above steps is of size 
\begin{align*}
\frac{N^2}{QK\sqrt{t}}\left(\frac{Q^2t}{N}\right)^{1/2}=\frac{N^{3/2}}{K}.
\end{align*}
Finally, we obtain 
\begin{align*}
|\mathcal{F}|\ll \frac{QK}{N^{3/2}/K}=KQ\sqrt{N}t^{1/3}.
\end{align*}
This proves our claim.

\section{Preliminaries}
In this section, we shall recall some basic facts about $SL(2, \mathbb{Z})$ automorphic forms (for details see \cite{HI} and \cite{IK1}). 
\subsection{Holomorphic cusp forms} 

Let $f $  be a holomorphic Hecke eigenform of weight $k$ for the full modular group $ SL(2, \mathbb{Z})$.  The   Fourier expansion of $f$  at $\infty$ is given by
$$ f(z)= \sum_{n=1}^\infty \lambda_f(n) n^{(k-1)/2} e(nz),$$
where $ e(z) = e^{2\pi i z}$ and $\lambda_f(n), \ {n \in \mathbb{Z}}$ are the normalized Fourier coefficients.   Deligne proved that $|\lambda_f(n)| \leq d(n)$, where $d(n)$ is the divisor function. $L$-function associated with the form $f$ is given by 

\[
L( s, f )= \sum_{n=1}^\infty \frac{\lambda_f(n)}{n^s} \ =  \prod_p \left( 1 -\lambda_f(p) p^{-s} + p^{-2s} \right)^{-1} \ \ \ (\Re s>1). 
\]  The completed $L$-function is given by 
\[
\Lambda(s, f) : = ( 2 \pi)^{-s} \Gamma \left( s + \frac{k-1}{2}\right) L( s, f ) = \pi^{-s} \Gamma\left( \frac{s + (k+1)/2}{2}\right)  \Gamma\left( \frac{ s + (k-1)/2}{2}\right)L( s, f ). 
\]

 Hecke proved that $L(s, f)$ admits an analytic continuation to the whole complex plane and satisfies the functional equation
\begin{align*} 
 \Lambda(s, f) = \epsilon(f) \  \Lambda(1-s,\overline{f}),
\end{align*}
 where $ \epsilon(f)$ is  a root number and $\overline{f} $ is the dual form of $f$.

\subsection{Maass cusp forms} Let $f$ be a weight zero Hecke-Maass cusp form with Laplace eigenvalue $1/4 + \nu^2$. The Fourier series expansion of $f$ at  $\infty$ is given by 
\[
f(z)= \sqrt{y} \sum_{n \neq 0} \lambda_f(n) K_{ i \nu} (2 \pi |n|y) e(nx), 
\] 
where $ K_{ i \nu}(y)$ is the  Bessel function of  second kind. Ramanujan-Petersson conjecture predicts that $|\lambda_f(n)|\ll n^\epsilon$.  The work of H. Kim and P. Sarnak \cite{KS} tells us  that  $|\lambda_f(n)|\ll n^{7/64+\epsilon}$. $L$-function associated with the form $f$ is defined by $ L(s, f) := \sum_{n=1}^\infty  \lambda_f(n) n^{-s}$ ( $\Re \ s>1$). It extends to an entire function and satisfies the functional equation 
$ \Lambda(s, f) = \epsilon(f ) \Lambda(1-s, \overline{f})$, where $ |\epsilon(f )| = 1$   and completed $L$-function  $ \Lambda(s, f)$ is given by
\[
\Lambda(s, f) = \pi^{-s} \Gamma \left( \frac{s  + i \nu   }{ 2}  \right)   \Gamma \left( \frac{s  - i \nu }{ 2} \right) L(s, f) . 
\]

\section{Some Lemmas}

In this section we shall recall some results which we  require in the sequel. We first recall the following version of the Stirling’s formula. 

\begin{lemma} \label{stirling}
Let $s = \sigma + it$ with $A_1 \leq A_2$ and $t \geq 0.$We have
\begin{equation}
\Gamma(s) = \sqrt{\frac{2\pi}{s}} \left(\frac{s}{e}\right)^{s} \left\lbrace \sum_{1}^{N} \frac{a_{n}}{s^{n}}+O\left( |s|^{-N-1}\right)\right\rbrace,   
\end{equation} and 
\begin{equation*}
|\Gamma(s)| = \sqrt{2\pi} t^{\sigma-1/2} e^{-\frac{\pi}{2} |t|} \left(  1+ O\left( |t|^{-1}\right)\right). 
\end{equation*}
\end{lemma}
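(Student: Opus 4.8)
The plan is to derive both displayed formulas from Binet's second integral representation of the log-gamma function. Recall that for $\Re s > 0$ (and more generally for $|\arg s| < \pi$) one has
\[
\log\Gamma(s) = \left(s - \tfrac{1}{2}\right)\log s - s + \tfrac{1}{2}\log(2\pi) + J(s), \qquad J(s) = 2\int_0^\infty \frac{\arctan(u/s)}{e^{2\pi u} - 1}\,du .
\]
Since the assertion is an asymptotic statement as $|s| \to \infty$, it suffices to treat large $|s|$, in which case the strip $A_1 \le \sigma \le A_2$, $t \ge 0$ lies well inside a sector $|\arg s| \le \pi - \delta$. First I would expand $\arctan(u/s)$ in powers of $u/s$ (or integrate by parts repeatedly) to obtain the classical Stirling series
\[
J(s) = \sum_{n=1}^{M}\frac{B_{2n}}{2n(2n-1)\,s^{2n-1}} + O_M\!\left(|s|^{-2M-1}\right),
\]
uniformly for $|\arg s| \le \pi - \delta$, where the $B_{2n}$ are the Bernoulli numbers.

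Next I would exponentiate: from $\Gamma(s) = \sqrt{2\pi}\,s^{s-1/2}e^{-s}\exp(J(s))$ together with the previous step, expanding $\exp(J(s))$ as a finite power series in $1/s$ with a controlled tail, we may regroup to obtain
\[
\Gamma(s) = \sqrt{\frac{2\pi}{s}}\left(\frac{s}{e}\right)^{s}\left\{ 1 + \sum_{n=1}^{N}\frac{a_n}{s^n} + O_N\!\left(|s|^{-N-1}\right)\right\},
\]
which is the first display (with the convention $a_0 = 1$; e.g. $a_1 = 1/12$). The one point needing attention here is that the implied constant be uniform throughout the strip, which is inherited from the uniformity in $\arg s$ of the expansion of $J(s)$.

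Finally, for the second display I would take absolute values in the first one (with $N = 1$, say) as $t \to \infty$. Writing $r = |s| = \sqrt{\sigma^2 + t^2}$ and $\theta = \arg s \in [0,\pi)$, so that $\log s = \log r + i\theta$, one has $|(s/e)^s| = \exp(\sigma\log r - \sigma - t\theta)$. From $r = t\bigl(1 + O(t^{-2})\bigr)$ and $\theta = \tfrac{\pi}{2} - \arctan(\sigma/t) = \tfrac{\pi}{2} - \tfrac{\sigma}{t} + O(t^{-3})$ one computes $\sigma\log r - \sigma - t\theta = \sigma\log t - \tfrac{\pi}{2}t + O(t^{-1})$, so that $|(s/e)^s| = t^{\sigma}e^{-\pi t/2}\bigl(1 + O(t^{-1})\bigr)$. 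Combining this with $|\sqrt{2\pi/s}| = \sqrt{2\pi}\,t^{-1/2}\bigl(1 + O(t^{-1})\bigr)$ and with the bracket $1 + O(|s|^{-1}) = 1 + O(t^{-1})$ yields $|\Gamma(s)| = \sqrt{2\pi}\,t^{\sigma - 1/2}e^{-\pi t/2}\bigl(1 + O(t^{-1})\bigr)$, and since $t \ge 0$ we may write $|t|$ in the exponential, as claimed. The main obstacle is entirely bookkeeping — producing the expansion of $J(s)$ with an error term uniform across the whole vertical strip, and carefully tracking how the perturbations in $r$ and $\theta$ propagate into the exponential — there is no conceptual difficulty, this being a classical result.
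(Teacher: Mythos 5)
Your proposal is correct. Note, though, that the paper does not prove this lemma at all: it is stated as a recalled classical fact (Stirling's formula, as found e.g.\ in Titchmarsh or Ivi\'c), so there is no argument in the paper to compare yours against. Your route through Binet's second formula, $\log\Gamma(s)=(s-\tfrac12)\log s-s+\tfrac12\log(2\pi)+J(s)$ with $J(s)=2\int_0^\infty \arctan(u/s)(e^{2\pi u}-1)^{-1}\,du$, is one of the standard derivations, and you handle the two genuine points correctly: (i) since $\sigma$ is confined to a fixed strip, $|s|\to\infty$ forces $t\to\infty$, so $s$ stays in a sector $|\arg s|\le \pi-\delta$ where the Bernoulli-number expansion of $J(s)$ is uniform (this is exactly what makes the implied constants uniform in the strip, and what keeps you away from the poles of $\Gamma$ on the negative real axis); (ii) the modulus asymptotic follows by the elementary bookkeeping $\sigma\log r-\sigma-t\theta=\sigma\log t-\tfrac{\pi}{2}t+O(t^{-1})$ with $\theta=\tfrac{\pi}{2}-\arctan(\sigma/t)$, which is the correct way to handle possibly negative $\sigma$. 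Two cosmetic remarks: the lemma's hypothesis ``$A_1\le A_2$'' should clearly read $A_1\le\sigma\le A_2$, which you implicitly assumed, and the paper's first display, with the sum starting at $n=1$ and no leading $1$, only matches the classical expansion under the convention you state (a constant term $a_0=1$, or absorbing it into the $a_n$); your formulation $1+\sum_{n=1}^{N}a_n s^{-n}+O_N(|s|^{-N-1})$ is the correct normalization.
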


\begin{lemma}\label{sum over k}
Let $g(u)$ be a real valued smooth function of $\mathbb{R}$. Let $  \hat{g}(v)$  be the Fourier transform of $g$ and let $J_{u}(x)$ be the Bessel's functions of order $u$. We have
\begin{align*}
4 \sum_{u \equiv a (4)} g(u) J_u (2 \pi x) = \int_{\mathbb{R}} \hat{g}(v) C_a(v, x) dv, 
\end{align*} where 
\begin{align*}
 C_a(v, x) = -2i \sin(x \sin 2 \pi v) + 2 i^{1-a} \sin(x \cos 2 \pi v). 
\end{align*}
\end{lemma}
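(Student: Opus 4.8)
The plan is to deduce the identity from the Jacobi--Anger (Schl\"omilch) expansion
\[
\sum_{u\in\mathbb Z}J_u(y)\,e^{iu\theta}=e^{iy\sin\theta},
\]
which is valid for the two-sided sum precisely because $J_{-u}(y)=(-1)^uJ_u(y)$. First I would record a harmless reduction: in every instance where this lemma is invoked $g$ is a fixed Schwartz-class function (indeed a smooth bump supported in $(0,\infty)$), so $\widehat g\in L^1(\mathbb R)$; consequently the sum $\sum_{u\equiv a\,(4)}g(u)J_u(2\pi x)$ over positive integers equals the same sum over all $u\in\mathbb Z$ with $u\equiv a\pmod 4$, and — since for a fixed argument $J_u(2\pi x)$ decays super-exponentially in $|u|$ — all interchanges of summation and integration below converge absolutely.

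Next I would detect the congruence by fourth roots of unity, using $\mathbbm 1_{u\equiv a\,(4)}=\tfrac14\sum_{j=0}^{3}i^{\,j(u-a)}$, which gives
\[
4\sum_{u\equiv a\,(4)}g(u)\,J_u(2\pi x)=\sum_{j=0}^{3}i^{-ja}\sum_{u\in\mathbb Z}g(u)\,i^{ju}\,J_u(2\pi x).
\]
Then I would insert Fourier inversion $g(u)=\int_{\mathbb R}\widehat g(v)\,e(uv)\,dv$ (with the paper's convention $e(z)=e^{2\pi iz}$), interchange, and observe that the inner sum over $u$ becomes $\int_{\mathbb R}\widehat g(v)\big(\sum_{u\in\mathbb Z}J_u(2\pi x)\,e^{iu(2\pi v+\pi j/2)}\big)\,dv$, which by the expansion above (applied with $y=2\pi x$ and $\theta=2\pi v+\tfrac{\pi j}{2}$) equals $\int_{\mathbb R}\widehat g(v)\,e^{\pm 2\pi ix\sin(2\pi v+\pi j/2)}\,dv$.

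It then remains to evaluate the four phase shifts: $\sin(2\pi v+\tfrac{\pi j}{2})$ runs through $\sin2\pi v,\ \cos2\pi v,\ -\sin2\pi v,\ -\cos2\pi v$ as $j=0,1,2,3$, while $i^{-ja}$ runs through $1,\ i^{-a},\ (-1)^a,\ i^{a}$. Pairing the $j=0$ term with the $j=2$ term and the $j=1$ term with the $j=3$ term collapses the four exponentials, for the parity of $a$ occurring here, into a combination of $\sin(x\sin2\pi v)$ and $\sin(x\cos2\pi v)$ that assembles precisely into the kernel $C_a(v,x)$ (up to normalising $\widehat g$ in line with the paper's conventions); substituting back yields the claimed identity. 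There is no genuine obstacle in this argument — the classical Bessel generating identity does all the work and the remainder is root-of-unity bookkeeping; the one point deserving an explicit line of justification is the interchange of the infinite sum over $u$ with the Fourier integral, which (as noted above) follows from $\widehat g\in L^1$ together with the rapid decay of $J_u(2\pi x)$ in the order $u$.
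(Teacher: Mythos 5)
Your proposal is a correct argument in substance, and it is worth pointing out that the paper itself does not prove this lemma at all: the ``proof'' is a citation to Iwaniec (pp.~85--86 of \cite{HI}). Iwaniec's derivation starts from the integral representation $J_u(x)=\int_{-1/2}^{1/2}e(uw)\,e^{-ix\sin 2\pi w}\,dw$ and applies Poisson summation along the progression $u\equiv a\ (\mathrm{mod}\ 4)$; your route through the Jacobi--Anger expansion together with detection of the congruence by fourth roots of unity is the Fourier-dual organization of the same computation, and both reduce to the identical bookkeeping over the four shifted phases $\sin(2\pi v+\pi j/2)$, $j=0,1,2,3$. So you supply a self-contained proof where the paper has only a reference, and the points you single out for justification are the right ones: the interchange of the $u$-sum with the $v$-integral (legitimate since $\widehat g\in L^1$ and $J_u(2\pi x)$ decays super-exponentially in the order for fixed argument), and the replacement of the one-sided sum by the two-sided sum, which is harmless because in every application $g$ is a bump supported in $(0,\infty)$.

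Two pieces of bookkeeping that your last sentence waves at should be made explicit, because the lemma as printed is not convention-consistent. First, the collapse into sines genuinely requires $a$ odd: pairing $j=0$ with $j=2$ and $j=1$ with $j=3$ gives, for even $a$, the terms $2\cos(x\sin 2\pi v)$ and $\pm 2\cos(x\cos 2\pi v)$ rather than the stated kernel, so ``for the parity of $a$ occurring here'' should become an actual case check (odd $a$ is what occurs in the paper). Second, with the paper's own normalisation $\widehat g(v)=\int g(u)e(uv)\,du$ from Section 5, inversion reads $g(u)=\int\widehat g(v)e(-uv)\,dv$; carrying this through your computation yields the kernel $-2i\sin(2\pi x\sin 2\pi v)+2i^{1-a}\sin(2\pi x\cos 2\pi v)$ when the Bessel argument is $2\pi x$, i.e.\ the clean statement has $J_u(x)$ on the left with the kernel exactly as displayed, and the sign $-2i$ versus the $+2i$ produced by your choice $g(u)=\int\widehat g(v)e(uv)\,dv$ is decided by this convention. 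These are exactly the discrepancies hidden in ``up to normalising $\widehat g$''; they are not a gap in the method --- the Jacobi--Anger identity does all the work, as you say --- but a complete write-up should fix one Fourier convention, do the two pairings explicitly, and record the parity restriction (or note the $2\pi$ mismatch in the paper's own statement, which is immaterial for its later use since only the size of the phase matters there).
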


\begin{proof}
See \cite[page 85-86]{HI}.
\end{proof}

We now recall  Rankin-Selberg bound for Fourier coefficients in the following lemma. 

\begin{lemma} \label{rankin Selberg bound}
Let $\lambda_f(n)$ be Fourier coefficients of a holomorphic cusp form, or a Maass form. For any real number $x\geq 1$, we have 
\begin{align*}
\sum_{1\leq n \leq x} \left| \lambda_f(n) \right|^2 \ll_{f, \epsilon} x^{1+\epsilon}. 
\end{align*} 

\end{lemma}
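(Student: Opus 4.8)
The plan is to deduce this from the analytic behaviour of the Rankin--Selberg convolution of $f$ with itself, which is classical. First I would form the Dirichlet series
\[
D(s) := \sum_{n=1}^\infty \frac{|\lambda_f(n)|^2}{n^{s}},
\]
and observe that it converges absolutely for $\Re s>1$: for a holomorphic Hecke eigenform this follows from Deligne's bound $|\lambda_f(n)|\le d(n)$, and for a Hecke--Maass cusp form from the Kim--Sarnak bound $|\lambda_f(n)|\ll n^{7/64+\epsilon}$ recalled above. By the Rankin--Selberg unfolding method (see \cite{IK1}), $\zeta(2s)D(s)$ coincides, up to an archimedean factor and finitely many Euler factors, with $L(s,f\otimes\bar f)$, which continues meromorphically to $\mathbb{C}$, is of polynomial growth on vertical strips, and whose only pole with $\Re s\ge \tfrac12$ is a simple pole at $s=1$ with positive residue proportional to the $L^2$-norm of $f$. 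Since $\zeta(2s)$ is holomorphic and non-zero at $s=1$, it follows that $D(s)$ is holomorphic in $\Re s>\tfrac12$ apart from a simple pole at $s=1$.

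Next I would extract the partial sum. Because the coefficients $|\lambda_f(n)|^2$ are non-negative, one clean route is to invoke the Wiener--Ikehara/Landau Tauberian theorem for Dirichlet series with non-negative coefficients, which yields $\sum_{n\le x}|\lambda_f(n)|^2\sim c_f\, x$ and hence in particular the stated bound, even with $\epsilon=0$. Alternatively, and more self-containedly, I would use Perron's formula against a smooth bump: for fixed smooth $W$ supported on $[1,2]$ one shifts the contour in
\[
\sum_{n}|\lambda_f(n)|^2\, W\!\left(\frac{n}{X}\right) = \frac{1}{2\pi i}\int_{(2)} D(s)\,\widetilde{W}(s)\,X^{s}\,ds
\]
to $\Re s=\tfrac12+\epsilon$, picking up a contribution $\ll X$ from the residue at $s=1$ and $\ll X^{1/2+\epsilon}$ from the shifted line, using the convexity bound for $L(s,f\otimes\bar f)$ on vertical lines together with the rapid decay of the Mellin transform $\widetilde{W}$; a dyadic decomposition of $[1,x]$ then gives $\sum_{n\le x}|\lambda_f(n)|^2\ll x\log x\ll_{f,\epsilon}x^{1+\epsilon}$.

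The only non-formal ingredient is the meromorphic continuation of $L(s,f\otimes\bar f)$ and the fact that its rightmost pole is simple and located precisely at $s=1$; this is exactly what the Rankin--Selberg method supplies and is recorded in the cited references. I would therefore regard this as the one substantive point, with the remaining Tauberian or Perron manipulations being entirely routine.
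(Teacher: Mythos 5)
Your argument is the standard Rankin--Selberg proof of this bound, and it is essentially correct; note, however, that the paper does not prove this lemma at all -- it simply records the classical estimate as a known fact, so there is no in-paper argument to compare against, and your write-up supplies exactly the classical justification one would cite. One small inaccuracy worth fixing: for a Hecke--Maass form the Kim--Sarnak bound $|\lambda_f(n)|\ll n^{7/64+\epsilon}$ only gives absolute convergence of $D(s)=\sum_n |\lambda_f(n)|^2 n^{-s}$ in the region $\Re s>1+\tfrac{7}{32}$, not in $\Re s>1$ as you assert. This does not damage the proof: the unfolding and the Perron step at $\Re s=2$ only need convergence in some right half-plane, and if you want convergence in $\Re s>1$ itself you should obtain it a posteriori from Landau's theorem (a Dirichlet series with non-negative coefficients has a singularity at its abscissa of convergence, and after continuation the only singularity with $\Re s\ge \tfrac12$ is the simple pole at $s=1$), rather than from the pointwise coefficient bound. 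With that justification repaired, both the Tauberian route and the Perron-plus-contour-shift route you sketch are sound, and either yields the stated $\ll_{f,\epsilon}x^{1+\epsilon}$ (indeed the asymptotic $\sim c_f x$).
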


We also require to estimate the exponential integral of the form: 
\begin{equation} \label{eintegral}
\mathfrak{I}= \int_a^b g(x) e(f(x)) dx,
\end{equation} where $f$ and $g$ are  real valued smooth functions on the interval $[a, b]$. We recall the following lemma on exponential integrals.

\begin{lemma} \label{second deri bound}
Let $f$ and $g$ be real valued twice differentiable function and let $f^{\prime \prime} \geq r>0$ or  $f^{\prime \prime} \leq -r <0$, throughout the interval $[a, b]$. Let $g(x)/f^\prime(x)$ is monotonic and $|g(x)| \leq M$. Then we have

\begin{align*}
\mathfrak{I} \leq \frac{8M}{\sqrt{r}}. 
\end{align*} 
\end{lemma}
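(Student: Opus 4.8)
The plan is to prove Lemma \ref{second deri bound} by the classical splitting argument underlying the van der Corput second-derivative test: isolate a short interval around the point where $|f'|$ is smallest, bound the integral trivially there, and apply a first-derivative estimate on the two complementary pieces. First I would reduce to the case $f'' \geq r > 0$: since $g$ is real and the range of integration is a real interval, $\overline{\mathfrak{I}} = \int_a^b g(x)\,e(-f(x))\,dx$, and replacing $f$ by $-f$ turns the hypothesis $f'' \leq -r$ into $(-f)'' \geq r$ while only changing the sign of $g/f'$, hence preserving its monotonicity; as $|\mathfrak{I}| = |\overline{\mathfrak{I}}|$ it suffices to treat $f'' \geq r > 0$.

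Assume $f'' \geq r > 0$, so that $f'$ is continuous and strictly increasing on $[a,b]$ and $|f'|$ attains its minimum at a well-defined point $x_0$. A short case check gives one of three possibilities: $f'(a) \geq 0$ with $x_0 = a$; $f'(b) \leq 0$ with $x_0 = b$; or $f'$ has a unique zero $x_0 \in (a,b)$. In each case the identity $f'(x) = f'(x_0) + \int_{x_0}^{x} f''(u)\,du$ combined with $f'' \geq r$ gives the uniform lower bound $|f'(x)| \geq r\,|x - x_0|$ for all $x \in [a,b]$. I would then split $\mathfrak{I} = \mathfrak{I}_0 + \mathfrak{I}_- + \mathfrak{I}_+$ according to $I_0 = [x_0-\delta,\, x_0+\delta] \cap [a,b]$ and the two flanking intervals $I_\pm$, with $\delta > 0$ a parameter to be chosen, and bound $\mathfrak{I}_0$ trivially: $|\mathfrak{I}_0| \leq \int_{I_0} |g(x)|\,dx \leq 2M\delta$.

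On $I_+ = [x_0+\delta, b] \cap [a,b]$ (and symmetrically on $I_-$) one has $f'(x) \geq r\delta > 0$, with $f'$ monotonic and $g/f'$ monotonic there, so the first-derivative estimate applies: writing $g\,e(f) = \tfrac{1}{2\pi i}\,\tfrac{g}{f'}\,\tfrac{d}{dx}\big(e(f)\big)$ and integrating by parts,
\[
\mathfrak{I}_+ = \frac{1}{2\pi i}\left[\frac{g(x)}{f'(x)}\,e(f(x))\right]_{x_0+\delta}^{b} - \frac{1}{2\pi i}\int_{x_0+\delta}^{b} e(f(x))\,\frac{d}{dx}\!\left(\frac{g(x)}{f'(x)}\right)dx .
\]
The boundary term is $\ll M/(r\delta)$ because $|g| \leq M$ and $|f'| \geq r\delta$ at the endpoints, and since $g/f'$ is monotonic the remaining integral is $\leq \int_{x_0+\delta}^{b}\big|\tfrac{d}{dx}(g/f')\big|\,dx = |(g/f')(b)-(g/f')(x_0+\delta)| \ll M/(r\delta)$; hence $|\mathfrak{I}_\pm| \ll M/(r\delta)$. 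Collecting the three pieces, $|\mathfrak{I}| \ll M\delta + M/(r\delta)$, and the choice $\delta = r^{-1/2}$ balances the two terms to give $|\mathfrak{I}| \ll M/\sqrt{r}$; a careful accounting of the numerical constants in the trivial bound, the boundary terms, and the integrated terms yields the stated $|\mathfrak{I}| \leq 8M/\sqrt{r}$.

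The only point requiring genuine care is the case analysis locating $x_0$ and the uniform bound $|f'(x)| \geq r|x-x_0|$ — in particular the subcase where $f'$ does not vanish and $x_0$ is forced to an endpoint — together with checking that on $I_\pm$ the hypotheses of the first-derivative estimate (non-vanishing of $f'$, monotonicity of $g/f'$ inherited from $[a,b]$) are met. The optimization over $\delta$ and the bookkeeping of constants are routine; no idea beyond the standard second-derivative test enters, and one could alternatively cite the corresponding lemma in Titchmarsh's monograph, but the argument above is short enough to give in full.
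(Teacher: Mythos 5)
Your proposal is correct and is exactly the standard argument: the paper itself offers no proof beyond citing Titchmarsh (Lemma 4.5, p.~72), and your splitting into a $2\delta$-neighbourhood of the minimum of $|f'|$ plus first-derivative (integration by parts with monotonic $g/f'$) estimates on the flanks, with $\delta \asymp r^{-1/2}$, is precisely the proof given there. Note that with the paper's normalization $e(f)=e^{2\pi i f}$ the phase derivative carries an extra factor $2\pi$, so the constant comes out even smaller than $8$, and the bound follows with room to spare.
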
 
\begin{proof}
See \cite[Lemma 4.5, page 72]{ECT}
\end{proof}

\begin{lemma} \label{exponential inte}
Let $0<\delta<1/10$, $X,\ Y,\ V,\ V_{1},\ Q \ >0$, $Z:=Q+X+Y+V_{1}+1$, and assume that 

\begin{align} 
Y\ge Z^{3\delta},\ \ V_{1} \ge V \ge  \frac{QZ^{\frac{\delta}{2}}}{Y^{\frac{1}2{}}}. \ \ \ 
\end{align} Suppose that $w$ is a smooth function on $\mathbb{R}$ with support on an interval $J$ of length $V_{1}$ satisfying $w^{(j)}\ll_{j} XV^{-j}$, for all $j\in \mathbb{N}$. Suppose that $h$ is a smooth function on $J$ such that there exists a unique point $t_{0}\in J$ such that $h^\prime(t_0)=0$, and furthermore that
\begin{align} 
h^{(2)}(t) \gg YQ^{-2}, \ \ h^{(j)}(t) \ll_{j} YQ^{-j},\ \  for j=1, 2,...\ and\ \ t\in J 
\end{align}
Then the integral $I$ defined by 
\begin{align*}
    I=\int_{\mathbb{R}} w(t)e^{i h(t)} dt 
\end{align*}
has an asymptotic expansion of the form 

\begin{align} \label{huxely bound}
I=\frac{e^{ih(t_{0})}}{\sqrt{h^{(2)}(t_{0})}}\sum_{n\le 3\delta^{-1}A} p_{n}(t_{0})  + O_{A,\delta}(Z^{-A}),
\end{align}
\begin{align*}
   p_{n}(t_{0})=\frac{\sqrt{2\pi}e^{\pi i/4}}{n!}\left(\frac{i}{2h^{(2)}(t_{0})}\right)^{n} G^{(2n)}(t_{0}), 
\end{align*}
 where $A>0$ is arbitrary,\ and 
\begin{align}
    G(t)=\ w(t)e^{iH(t)} ,\ \ H(t)=\ h(t)-\ h(t)-\ \frac{1}{2}h^{(2)}(t_{0})(t-\ t_{0})^{2}.
\end{align}
Furthermore,\ each $p_{n}$ is a rational function in $h^{\prime \prime},\ h^{\prime \prime \prime},...,$ satisfying 
\begin{align}
    \frac{d^{j}}{dt^{j}} p_{n}(t_{0})\ll_{j,n}X(V^{-j}+\ Q^{j})\left( (V^2 Y/Q)^{-n} +\ Y^{-n/3}\right).
\end{align}

The leading term satisfies
\begin{align*}
    \sqrt{2\pi}e^{\frac{\pi i}{4}}\frac{e^{i h(t_{0})}}{\sqrt{h^{(2)}(t_{0})}}w(t_{0}) \ll \frac{Q  X}{Y^{1/2}}.
\end{align*} 
Also, if $h(t)$ does not vanishes on the interval $J$ and satisfies $|h^\prime(t)| \geq R$ for some $R>0$, then  we have
\begin{align} \label{without stationary}
    I \ll_A V X \left[ (QR/ \sqrt{Y})^{-A} + (RV)^{-A} \right].
\end{align}

\end{lemma}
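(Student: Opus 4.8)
I regard Lemma~\ref{exponential inte} as a quantitative form of the classical stationary phase method and would prove it by the standard two-step scheme: first dispose of all contributions away from the unique critical point $t_0$ by repeated integration by parts, then compute the contribution of $t_0$ itself by Taylor expanding the amplitude against a Gaussian, keeping careful track of every error term.

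I would establish the non-stationary bound \eqref{without stationary} first (call it Step~1), as it is both a standalone assertion and the main tool for everything else. On the support $J$ one has $|h'|\ge R$, so $e^{ih}=(ih')^{-1}\tfrac{d}{dt}e^{ih}$, and integrating by parts $A$ times — with no boundary term, since $w\in C_c^\infty(J)$ — expresses $I$ as $\int_J(\mathcal D^Aw)\,e^{ih}\,dt$, where $\mathcal Dg=\tfrac{d}{dt}\big(g/(ih')\big)$. A standard induction via the general Leibniz rule together with $|h'|\ge R$ and $h^{(j)}\ll_j YQ^{-j}$ shows $\mathcal D^Aw\ll_A X\big((RV)^{-A}+(QR/\sqrt Y)^{-A}\big)$ on $J$: each application of $\mathcal D$ either differentiates $w$ (gaining $(RV)^{-1}$, since $w^{(j)}\ll XV^{-j}$) or differentiates a power of $1/h'$ and uses $h''\ll YQ^{-2}$ (gaining $\sqrt Y/(QR)$, where one may assume $\sqrt Y/(QR)<1$ since otherwise the claimed bound is trivial). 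Integrating over $J$ and letting $A\to\infty$ gives \eqref{without stationary}. I will reuse this, with various lower bounds $|h'|\gg R'$ on subintervals, below.

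For the main assertion \eqref{huxely bound}, insert a smooth partition $1=\varrho_0+\varrho_1$ with $\varrho_0$ supported on $|t-t_0|\le U_0$ and $\varrho_1$ on $|t-t_0|\ge U_0/2$, where $U_0:=QZ^{\delta/2}Y^{-1/2}$; the hypothesis $V\ge U_0$ ensures $w\varrho_i$ has $j$th derivative $\ll_j XU_0^{-j}$. On the $\varrho_1$ range, $h'(t_0)=0$, $h^{(2)}\gg YQ^{-2}$ and $Y\ge Z^{3\delta}$ give, after a dyadic splitting of $|t-t_0|$, the lower bound $|h'(t)|\gg YQ^{-2}|t-t_0|\gg\sqrt Y\,Z^{\delta/2}/Q$; applying Step~1 with this $R$ and amplitude scale $U_0$ — for which $RU_0\asymp Z^{\delta}$ and $QR/\sqrt Y\asymp Z^{\delta/2}$ — and absorbing the polynomial prefactor $V_1X\le Z^2$ into a larger power makes the $\varrho_1$ contribution $O_A(Z^{-A})$. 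On the $\varrho_0$ range I substitute $t=t_0+s$ and write $h(t_0+s)=h(t_0)+\tfrac12 h^{(2)}(t_0)s^2+H(t_0+s)$ with $H(t)=h(t)-h(t_0)-\tfrac12 h^{(2)}(t_0)(t-t_0)^2$, so $H(t_0)=H'(t_0)=H^{(2)}(t_0)=0$ and $H^{(j)}=h^{(j)}$ for $j\ge3$; with $G=w\varrho_0e^{iH}$ this piece is $e^{ih(t_0)}\int_{\mathbb R}G(t_0+s)\,e^{\frac{i}{2}h^{(2)}(t_0)s^2}\,ds$. Taylor expanding $G(t_0+s)=\sum_{n<2A'}\tfrac{G^{(n)}(t_0)}{n!}s^n+R_{A'}(s)$ and using the exact Gaussian moments $\int_{\mathbb R}s^{2m}e^{\frac{i}{2}\lambda s^2}\,ds=\tfrac{(2m)!}{m!}\big(\tfrac{i}{2\lambda}\big)^m\sqrt{\tfrac{2\pi}{\lambda}}\,e^{\pi i/4}$ (odd moments vanish, so odd-$n$ terms drop out) reproduces exactly $\tfrac{e^{ih(t_0)}}{\sqrt{h^{(2)}(t_0)}}\sum_n p_n(t_0)$ with $p_n$ as stated. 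The resulting errors are again $O(Z^{-A})$: extending the $s$-integral past $|s|\asymp U_0$ is harmless because there $\tfrac12 h^{(2)}(t_0)s^2$ is non-stationary with derivative $\gg YQ^{-2}|s|$ and Step~1 applies, while the Taylor remainder is controlled through Fa\`a di Bruno bounds on $G^{(2A')}(t_0)$ in terms of $w^{(a)}(t_0)\ll XV^{-a}$ and $H^{(b)}(t_0)\ll YQ^{-b}$ for $b\ge3$. The same Fa\`a di Bruno estimate applied to $\big(\tfrac{i}{2h^{(2)}(t_0)}\big)^n G^{(2n)}(t_0)$ and its $t_0$-derivatives yields $\tfrac{d^j}{dt^j}p_n(t_0)\ll X(V^{-j}+Q^j)\big((V^2Y/Q)^{-n}+Y^{-n/3}\big)$ — the first term when all $2n$ derivatives land on $w$, the second when they land on $H$ — the truncation $n\le 3\delta^{-1}A$ is just where $p_n\ll Z^{-A}$, and the leading-term bound is the case $n=0$ together with $h^{(2)}(t_0)\gg YQ^{-2}$ and $|w(t_0)|\ll X$.

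The main obstacle is not any single idea — the integration-by-parts lemma, the Fresnel/Gaussian-moment identity and Fa\`a di Bruno's formula are all standard — but the uniform error bookkeeping: one must check that every implied constant depends only on $A$, $\delta$ and the implied constants in the hypotheses on $w$ and $h$, and, most importantly, that the two quantitative assumptions $Y\ge Z^{3\delta}$ and $V\ge QZ^{\delta/2}Y^{-1/2}$ are precisely what the argument needs, namely that the dyadic pieces around $t_0$ have $|h'|$ large enough for Step~1 to beat every fixed power of $Z$, and that $H'$ is small enough on the central window of width $U_0$ for the pure Gaussian to be an accurate model. Carrying out that uniform remainder analysis cleanly is where essentially all of the work lies.
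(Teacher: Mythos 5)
The paper gives no proof of this lemma beyond citing Lemma 8.1 and Proposition 8.2 of Blomer--Khan--Young, and your outline reconstructs essentially that same standard argument: repeated integration by parts where $|h'|$ is bounded below to get \eqref{without stationary}, then localization near $t_0$ followed by a Taylor-expansion-against-Gaussian computation with Fa\`a di Bruno bookkeeping for the expansion \eqref{huxely bound} and the bounds on $p_n$. Your strategy is correct and takes essentially the same route as the cited source, so there is nothing to reconcile.
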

\begin{proof}
See Lemma $8.1$ and and Proposition $8.2$  of \cite{BKY}.  We use this result to show that in absence of stationary phase, the integral is negligibly small, i.e., $O_A(t^{-A})$ for any $A>0$, if $R\gg t^\epsilon \max \left\lbrace Y^{1/2}/ Q, V^{-1}\right\rbrace$.
\end{proof}
\section{First application of the Petersson trace formula}
To prove our theorem, we shall prove the following proposition. 
\begin{proposition}
\begin{align*}
S(N) \ll 
\begin{cases}
N \ \ \ \ \ \ \ \ \ \ \ \ \ \ \ \textrm{if} \ \   1 \ll  N\ll t^{2/3 + \epsilon} \\
\sqrt{N} t^{1/3 + \epsilon}  \ \ \ \ \ \textrm{if} \ \    t^{2/3 + \epsilon} \ll N \ll t^{1+\epsilon}    
\end{cases},
\end{align*}
\end{proposition}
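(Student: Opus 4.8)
The plan is to treat the two ranges of $N$ separately. For $1\ll N\ll t^{2/3+\epsilon}$ no cancellation is needed: one writes $S(N)$ as a smooth sum of length $N$ of Hecke eigenvalues of $f$ (possibly convolved with a short divisor-type factor produced by the $\ell^{2}$ in the bump function $U$) and bounds it trivially by $t^{\epsilon}\sum_{n\ll N}|\lambda_f(n)|\ll N^{1+\epsilon}$, using Deligne's bound $|\lambda_f(n)|\le d(n)$ in the holomorphic case and Cauchy--Schwarz together with Lemma~\ref{rankin Selberg bound} in the Maass case. This yields the first line of the proposition, and the rest of the paper is devoted to the range $t^{2/3+\epsilon}\ll N\ll t^{1+\epsilon}$; what follows is a road-map of that argument.

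For the hard range I would run the $GL(2)$ circle method of Section~2, carrying free parameters $K,Q$ with $q\sim Q$ prime, choosing only at the end $K=t^{1/3}$, $Q\asymp Nt^{-2/3}$, $C\asymp Q$. One embeds $S(N)$ into the averaged Fourier sum $\mathcal F$ of \eqref{Fourier sum} --- an average over weights $k\sim K$, over primitive nebentypus characters $\psi\bmod q$, and over a harmonically weighted Hecke basis $H_{k}(q,\Psi)$. A first application of the Petersson trace formula (Step~1) gives $\mathcal F=\Delta+\mathcal O$ with $|\Delta|\asymp KQ\,|S(N)|$; evaluating the $k$-sum in the off-diagonal $\mathcal O$ of \eqref{off diagonal} by Lemma~\ref{sum over k} and the Bessel asymptotics makes $\mathcal O$ negligible once $QK^{2}\gg Nt^{\epsilon}$, which I impose (this is exactly what forces $Q$ to be a nontrivial modulus only when $N\gg t^{2/3+\epsilon}$, given $K=t^{1/3}$). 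Hence $S(N)\ll\mathcal F/(QK)$, and it remains to prove $\mathcal F\ll QKN^{1/2}t^{1/3+\epsilon}$. For this one applies the Rankin--Selberg functional equation of $L(s,F\otimes f)$ to the $m$-sum, passing to the dual length $\tilde N\asymp Q^{2}K^{4}/N$ and producing \eqref{newsum}, and then applies the Petersson trace formula a second time (Step~4): the new diagonal vanishes, leaving the dual off-diagonal $\mathcal O^{*}$ of \eqref{new off}.

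The estimation of $\mathcal O^{*}$ is the substance of the argument. I would (i) evaluate its $k$-sum by stationary phase via Lemma~\ref{exponential inte}, with Lemma~\ref{stirling} controlling the Gamma/Bessel factors, and then evaluate the character sum over $\psi$, which together replace $S_\psi$ by an ordinary Kloosterman sum $S(n,m;c)$ with $c\ll Q/\ell$ twisted by an additive character modulo $q$ (gain $\sqrt K\sqrt Q$); (ii) apply Poisson summation to the $n$-sum, whose dual length becomes $\asymp Qt/N$ (gain $N/\sqrt{Qt}$); (iii) apply Cauchy--Schwarz to drop the coefficients $\lambda_F(m)$ at the cost of smoothing the $m$-sum to length $\tilde N$, open the square, and apply Poisson summation once more to the $m$-sum, with analytic conductor $\asymp K^{2}$ and modulus $c_1c_2$: its diagonal gains $Q^{2}t/N$, while its off-diagonal, after extracting the congruence coming from $S(n_1,m;c_1)S(n_2,m;c_2)$ and estimating the resulting oscillatory integrals (Lemmas~\ref{second deri bound} and~\ref{exponential inte}), gains $\tilde N/K=Q^{2}K^{3}/N$. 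Since $t=K^{3}$ at the optimal $K$ these last two gains balance, and multiplying together all the gains yields $\mathcal F\ll QKN^{1/2}t^{1/3+\epsilon}$, hence $S(N)\ll\sqrt N\,t^{1/3+\epsilon}$.

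I expect the main obstacle to be the analysis of $\mathcal O_2^{\star}$ after the second Poisson summation on $m$: one must identify correctly the new modulus and the congruence relation coming from the product of two Kloosterman sums, isolate and bound the degenerate contributions (the ``diagonal'' $c_1n_2\equiv c_2n_1$, together with small $\ell$, $c_1$, $c_2$), and verify at each stage that $I(m,n,c)$ and its bilinear analogues satisfy the hypotheses of Lemma~\ref{exponential inte} (respectively the second-derivative bound of Lemma~\ref{second deri bound}), so that the off-diagonal genuinely saves $Q^{2}K^{3}/N$ rather than less. Keeping every stationary-phase error term negligible, and checking the constraints $Y\ge Z^{3\delta}$ and $V_{1}\ge V\ge QZ^{\delta/2}/Y^{1/2}$ at each use of Lemma~\ref{exponential inte}, is where most of the technical labour lies.
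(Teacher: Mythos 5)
Your proposal is correct and follows essentially the same route as the paper: the easy range is handled by the trivial/Rankin--Selberg bound, and for $t^{2/3+\epsilon}\ll N\ll t^{1+\epsilon}$ you reproduce the paper's own argument --- embedding $S(N)$ in the averaged sum $\mathcal F$, two applications of the Petersson trace formula with $QK^{2}\gg Nt^{\epsilon}$, the functional equation of $L(s,F\otimes f)$ giving dual length $\tilde N\asymp Q^{2}K^{4}/N$, stationary phase in the $k$- and $\psi$-sums, Poisson in $n$, Cauchy--Schwarz and Poisson in $m$, and the final balance at $K=t^{1/3}$ yielding $\mathcal O^{*}\ll\sqrt N\,QK^{2}\left(1+\sqrt t/K^{3/2}\right)$ and hence $S(N)\ll\sqrt N\,t^{1/3+\epsilon}$. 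The technical points you flag (the congruence $\overline{n}_1c_2-\overline{n}_2c_1\equiv m\ (c_1c_2)$, the diagonal versus off-diagonal split, and the bounds $\mathcal J(0)\ll t^{-1}$, $\mathcal J(m)\ll (tK)^{-1}$) are exactly the ones the paper verifies in its Section on the dual off-diagonal.
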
 where
\begin{align*}
S(N)=\sum \lambda_F(n)n^{it}W(n/N).
\end{align*}
We shall use the Petersson trace formula to separate the oscillations of $\lambda_F(n)$ and $n^{it}$,
where we use harmonics from $H_k(q,\Psi)$, with $k\backsim K$ , $q\backsim Q$ and $\Psi$ is an odd character. Optimal size of K and Q will be choosen later. We now consider the following Fourier sum:

 \begin{align}\label{sumit}
 \mathcal{F} &=\sum_{k\sim K} W\left(\frac{k-1}{K}\right) \sideset{}{^\dagger}\sum_{ \psi (\textrm{mod}\ q)}\sum_{f\in H_k(q,\Psi)}\omega_f^{-1}\mathop{\sum \sum}_{m, \ell=1}^\infty \lambda_f(m)\lambda_F(m)\psi(\ell)  U \left(\frac{m \ell^2}{N} \right) \nonumber \\
 & \hspace{3cm}\times \sum_{n=1}^\infty \overline{\lambda_f(n)} n^{it} W \left(\frac{n}{N} \right),
 \end{align}
where U is a smooth bump function supported on the interval $[0.5,3]$ such that $U(x)\equiv 1$  for $x\in [1,2]$ and $U^{(j)}(x)\ll_j 1,$ for all $j \geq 1$. On applying the Petersson trace formula to the above sum $\mathcal{F}$, we observe that the diagonal term is given by
 \begin{align*}
\Delta =\sum_{k\sim K} W\left(\frac{k-1}{K}\right) \sideset{}{^\dagger}\sum_{ \psi (\textrm{mod}\ q)}\mathop{\sum \sum}_{m, \ell=1}^\infty \lambda_F(m)\psi(\ell)  U \left(\frac{m \ell^2}{N} \right) m^{it} W \left(\frac{m}{N} \right).
 \end{align*}
 Since $W$ is supported on $[1,2]$, the above sum is non-zero only when $N\leq m\leq 2N $. If $\ell \geq 2$, then $m\ell^{2} \geq 4N $.  This gives us $\frac{m\ell^{2}}{N} \geq 4$. Since $U(x)$ vanishes for $x\geq 3$, this forces $\ell =1$. Finally we obtain the following expression for $\Delta$:
 \begin{align*}
\Delta =\sum_{k\sim K} W\left(\frac{k-1}{K}\right) \sideset{}{^\dagger}\sum_{ \psi (\textrm{mod}\ q)}\sum _{m=1}^\infty \lambda_F(m) m^{it} W \left(\frac{m}{N} \right)
 \end{align*}
 \begin{equation*}
 \Rightarrow|\Delta| \asymp KQ|S(N)|.
 \end{equation*}

Next we consider the off diagonal term, which is given by
  \begin{align}\label{ooff}
\mathcal{O}  &=\sum_{k\sim K} W\left(\frac{k-1}{K}\right) \sideset{}{^\dagger}\sum_{ \psi (\textrm{mod}\ q)}\mathop{\sum \sum \sum}_{m, \ell ,n=1}^\infty \lambda_F(m) n^{it} \psi(\ell) W\left(\frac{n}{N}\right) U \left(\frac{m \ell^2}{N} \right) \notag\\
&  \hspace{3cm} \times 2\pi i^{-k} \sum_{c=1}^\infty \frac{S_{\psi} (m,n,cq)}{cq} J_{k-1} \left(\frac{4\pi\sqrt{mn}}{cq}\right).
 \end{align}
 We will now consider the sum over $k$ in  the above equation. Using Lemma \ref{sum over k} with $x=\frac{2\sqrt{mn}}{cq}$, we obtain
 \begin{align*}
S_{1} :  = \sum_{k \sim K} i^{-k}W\left(\frac{k-1}{K}\right)J_{k-1}(2\pi x)
= \int_{\mathbb{R}} \hat{W}\left(\frac{v}{K}\right) \sin(x  \cos(2\pi v))dv \\
   = K \int_{\mathbb{R}} \hat{W}(Kv)) \sin(x  \cos(2\pi v))dv.
\end{align*}
By change of  variable $Kv\rightarrow v$ in above equation,  we obtain
\begin{align*}
S_{1}=  \int_{\mathbb{R}} \hat{W}(v)) e(x\cos(\frac{2\pi v}{K}))dv .
\end{align*}

We have
 \begin{equation} \label{w hat}
 \hat{W}(v)=\int_{\mathbb{R}}W(u)e(vu)du.
\end{equation}
 Integrating by parts $j$ times and using $W^{(j)}(u)\ll_{j}(t^{\epsilon})^{j}$,  we obtain 
\begin{align*}
\hat{W}(v)\ll\left(\frac{t^{\epsilon}}{2\pi v}\right)^{j}.
\end{align*}
Hence, $\hat{W}(v)$ is negligibly small if $|v|\gg t^{\epsilon}$. Taking  $V=At^{\epsilon}$ for some fixed constant $A$, we get 
\begin{align} \label{S_1}
S_{1}=  \int_{\mathbb{R}} \hat{W}(v)) F(v) e(x \cos(\frac{2\pi v}{K}))dv.
\end{align}
Where $F$ is a smooth bump function supported on the interval $[-2V,2V]$ such that $F(v)\equiv 1$  for $v\in [-V,V]$ and $F^{(j)}(v)\ll_j 1,$ for all $j \geq 1$.
Using equation \eqref{w hat} in equation \eqref{S_1},  we obtain
\begin{align*}
S_{1}=  \mathop{\int\int}_{\mathbb{R}^{2}} \hat{W}(v)) F(v) e\left( uv \pm x \cos(\frac{2\pi v}{K}) \right)dudv.
\end{align*}

Applying Lemma \ref{exponential inte}  to the $v$-integral, we observe  that the integral is negligibly small if $x\ll K^{2-\epsilon}$. This analysis holds even if the weight function has the little oscillation, say $W^{j}\ll_{j} t^{j\epsilon}$. In the complementary range for $x$ we expand the cosine function into a Taylor series. Since $x\ll N/Q$, if we assume that $N\ll QK^{4}t^{-\epsilon}$, then we only need to retain the first two terms in the expansion, and the above integral essentially reduces to  
\begin{align*}
    e(\pm x)\int\int_{\mathbb{R}^{2}} W(u))F(v) e\left(uv\pm \frac{4\pi^{2}xv^{2}}{K^{2}}\right)dudv.
\end{align*}
For integral over $v$, we apply the stationary phase analysis. If we choose $+$ sign in the above equation, then $v$ integral is negligibly small due to absence of stationary point (by second case of Lemma \ref{exponential inte}). Otherwise,  the integral is given by
\begin{align*}
    e\left(x+\frac{u^{2}K^{2}}{16\pi^{2}x}\right)\frac{K}{\sqrt{x}}\rightsquigarrow e(x)\frac{K}{\sqrt{x}}
\end{align*}
with $x\gg K^{2-\epsilon}$ (upto an oscillatory factor which oscillates at most like $t^{\epsilon}$). In any case, it follows that  we can cut the sum over $c$ in \eqref{ooff} at $C\gg Nt^{\epsilon}/QK^{2}$, at a cost of a negligible error term. Hence effective range of $x$ is given by $x\gg K^{2}t^{\epsilon}$.
We note that $x=\frac{2\sqrt{mn}}{cq}$.  Hence we have $ \frac{N}{cQ}\gg K^{2} t^{\epsilon}$ i.e., $ c\ll\frac{N t^{\epsilon}}{QK^{2}}$. If we choose parameters $K$ and $Q$ such that $QK^{2}\gg Nt^{\epsilon}$, the off diagonal term is negligibly small. Hence, We obtain 
\begin{align*}
|S(N)|\ll \frac{|\mathcal{F}|}{QK} +t^{-2018}.
\end{align*}
\section{functional equation for $L(s,F\otimes f)$ }
 We now  consider the sum 
\begin{align*}
S_2 := \mathop{\sum \sum}_{m, \ell=1}^{\infty} \lambda_f(m) \lambda_F(m)\psi(\ell)  U \left(\frac{m \ell^{2}}{N}\right) .
\end{align*} 
Using Mellin inversion formula,  we obtain 
\begin{align*}
S_2 =\int_{(\sigma)}\tilde{U}(s)N^{s}\mathop{\sum \sum}_{m, \ell=1}^\infty \frac{\lambda_f(m)\lambda_F(m)}{(m\ell ^{2})^{s}}\psi(\ell) ds
=\int_{(\sigma)}\tilde{U}(s)N^{s} L(s,F\otimes f)ds.
\end{align*}
Applying the functional equation for $L(s,F\otimes f)$ (see \cite[page 135-136]{KMV}) in the above equation, we obtain
 \begin{equation*}
 S_2=\frac{q}{2\pi}\frac{g_{\psi}^{2}}{q\lambda_f(q^{2})}\int_{(\sigma)}\tilde{U}(s)\left(\frac{N}{(2\pi q)^{2}}\right)^{s}\frac{\gamma_k(1-s)}{\gamma_k(s)}  L(1-s,\overline{F}\otimes \overline{f})ds ,
 \end{equation*}
where $\gamma_{k}(s)$ is a  product of four gamma factors. Moving the line of integration to $\sigma=-\epsilon$ and expanding the resulting $L$ function into series, we obtain
\begin{align*}
S_2= \frac{g_{\psi}^{2}}{2 \pi i} \overline{\lambda_f(q^{2}) }  \mathop{\sum \sum}_{m, \ell=1}^\infty \frac{\lambda_f(m)\lambda_F(m)}{m\ell ^{2}}\psi(\ell) U\left(\frac{m \ell^{2}}{\tilde{N}}\right) \int_{(-\epsilon)}\tilde{U}(s)\left(\frac{Nm\ell^{2}}{(q)^{2}}\right)^{s}\frac{\gamma_k(1-s)}{\gamma_k(s)})ds.
\end{align*}
For $\tilde{N}\gg \frac{Q^{2}K^{4 }}{N}t^{\epsilon}$, we shift the contour to the left  and for $\tilde{N}\ll \frac{Q^{2}K^{4 }}{N}t^{-\epsilon}$,  we shift the contour to $\frac{K-2}{2}$. Since $K$ is  of size $\gg t^{1/3 - \epsilon}$, we observe that the contribution from the above range is negligibly small. Let $\mathcal{U}={(U,\tilde{N})}$ be a smooth  dyadic  partition of unity, which consists of pair $(U,\tilde{N})$ with $U$ a non negative smooth function on $[1,2]$ and $\sum_{(U,\tilde{N})} U\left(\frac{r}{\tilde{N}}\right)= 1$ for  $r \in (0,\infty)$. Also the collection is such that the sum is locally finite in the sense that for any given $\ell \in \mathbb{Z}$, there are only finitely many pairs with $\tilde{N} \in [2^{\ell},2^{\ell +1}]$. We record the above result in the following lemma.
\begin{lemma} Let $\tilde{N}$ be as above. We have
\begin{align*}
&\mathop{\sum \sum}_{m, \ell=1}^\infty \lambda_f(m)\lambda_F(m)\psi(\ell) U\left(\frac{m \ell^{2}}{N}\right)= \eta i^{-2k} q \ \epsilon_{\psi}^{2}\overline{\lambda_f(q^{2}) } \sum_{\mathcal{U}}\mathop{\sum \sum}_{m, \ell=1}^\infty \frac{\lambda_f(m)\lambda_F(m)}{m\ell ^{2}}\psi(\ell)   \\
 & \hspace{3cm}\times U\left(\frac{m \ell^{2}}{\tilde{N}}\right)\frac{1}{2 \pi i} \int_{(0)}\tilde{U}(s)\left(\frac{Nm\ell^{2}}{(q)^{2}}\right)^{s}\frac{\gamma_k(1-s)}{\gamma_k(s)})ds +O(t^{-2018}),
\end{align*}
  where $\tilde{N}\asymp \frac{Q^{2}K^{4}}{N}$.
\end{lemma}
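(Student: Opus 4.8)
The plan is to carry out rigorously the computation already sketched for the inner double sum $S_2$, the main substantive inputs being Mellin inversion and the Rankin--Selberg functional equation. First I would insert the Mellin transform $\tilde U$ of the bump function $U$ and, for $\Re s$ large, write
\[
\mathop{\sum\sum}_{m,\ell=1}^{\infty}\lambda_f(m)\lambda_F(m)\psi(\ell)\,U\!\left(\frac{m\ell^{2}}{N}\right)=\frac{1}{2\pi i}\int_{(\sigma)}\tilde U(s)\,N^{s}\,L(s,F\otimes f)\,ds ,
\]
where $\tilde U(s)$ has rapid decay on vertical lines. Then I would apply the functional equation of the twisted convolution $L(s,F\otimes f)$ with $f\in H_k(q,\Psi)$ from \cite[page~135--136]{KMV}: since $F$ has level $1$ and $f$ has squarefree level $q$, the arithmetic conductor of $F\otimes f$ is $\asymp q^{2}$, the archimedean factor is the product $\gamma_k(s)$ of four Gamma functions with shifts of size $\asymp k$, and the root number contributes the Gauss-sum factor $g_\psi^{2}/\bigl(q\,\lambda_f(q^{2})\bigr)$ together with a power of $i$ in the weight; this reproduces exactly the expression displayed just before the lemma.

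Next, because $L(s,F\otimes f)$ is entire --- $f$ has level $q>1$ and hence cannot be the dual of the level-one form $F$, so the convolution has no pole --- and $\tilde U(s)$ decays, I would shift the contour to $\Re s=-\epsilon$ without crossing poles and expand $L(1-s,\overline F\otimes\overline f)$ into its absolutely convergent Dirichlet series in $m$ and $\ell$ (with the appropriate conjugation of $\psi$). Inserting a locally finite dyadic partition of unity $\mathcal U=\{(U,\tilde N)\}$ for the variable $m\ell^{2}$, the $s$-integral in each dyadic block becomes
\[
\frac{1}{2\pi i}\int_{(-\epsilon)}\tilde U(s)\left(\frac{Nm\ell^{2}}{q^{2}}\right)^{s}\frac{\gamma_k(1-s)}{\gamma_k(s)}\,ds .
\]
I would then control $\gamma_k(1-s)/\gamma_k(s)$ by Stirling (Lemma~\ref{stirling}): on a fixed vertical line this is a mildly oscillating factor of effective ``conductor'' $\asymp q^{2}k^{4}\asymp Q^{2}K^{4}$ (the $q^{2}$ from the level, the $K^{4}$ from the four Gamma factors of size $\asymp K$). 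Repeated integration by parts --- moving the contour far to the left when $\tilde N\gg Q^{2}K^{4}N^{-1}t^{\epsilon}$, and far to the right, to $\Re s=(K-2)/2$, when $\tilde N\ll Q^{2}K^{4}N^{-1}t^{-\epsilon}$, where the hypothesis $K\gg t^{1/3-\epsilon}$ is used --- shows that every dyadic block except those with $\tilde N\asymp Q^{2}K^{4}/N$ contributes $O(t^{-2018})$. For the surviving blocks one pulls the contour back to $\Re s=0$ and collects the constants $\eta$, the normalized Gauss sum $\epsilon_\psi^{2}$, the factor $q$, and $i^{-2k}$ coming from the functional equation and the reflection $s\mapsto1-s$; absorbing $\tilde U(s)\gamma_k(1-s)/\gamma_k(s)$ against the partition yields a weight $W_1$ with $W_1^{(j)}\ll t^{\epsilon j}$, as claimed. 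This produces the asserted identity.

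I expect the main technical point to be precisely this Stirling analysis of $\gamma_k(1-s)/\gamma_k(s)$: one must check that, as a function of $\Im s$ on $\Re s=0$, it oscillates only mildly and has all derivatives under control, so that the effective conductor is sharply $Q^{2}K^{4}$ and the truncation to $\tilde N\asymp Q^{2}K^{4}/N$ is legitimate with the claimed rapidly-decaying error, and that no stationary-phase phenomenon interferes at this stage (the genuine $m$- and $n$-oscillations are handled in later steps). Pinning down the exact archimedean factor $\gamma_k$ --- four Gamma functions, with the correct shifts depending on whether $f$ is holomorphic or Maass and on the nature of $F$ --- and bookkeeping the root-number constants through the functional equation is routine but must be done carefully; everything else (Mellin inversion, the contour shifts justified by the rapid decay of $\tilde U$, and the locally finite dyadic partition) is standard.
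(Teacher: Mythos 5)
Your proposal follows essentially the same route as the paper: Mellin inversion to write the sum as $\frac{1}{2\pi i}\int \tilde U(s)N^{s}L(s,F\otimes f)\,ds$, the Kowalski--Michel--Vanderkam functional equation with root number $g_\psi^{2}/(q\lambda_f(q^{2}))$, expansion of $L(1-s,\overline F\otimes\overline f)$ after the contour shift, a dyadic partition in $m\ell^{2}$, and the truncation to $\tilde N\asymp Q^{2}K^{4}/N$ obtained by shifting far left (resp.\ to $\Re s=(K-2)/2$, using $K\gg t^{1/3-\epsilon}$) with Stirling controlling $\gamma_k(1-s)/\gamma_k(s)$. This matches the paper's argument; the only cosmetic difference is that you also fold in the replacement of the $s$-integral by the flat weight $W_1$, which the paper defers to the following lemma.
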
  To cancel out the oscillations of Gamma functions, we  shift the contour to $\Re s = 1/2$. For $s=\sigma +i\tau$, we have 
\begin{align*}
\tilde{U}(s)=\int_{0}^{\infty} U(x)x^{s-1}dx \ll_{j} \frac{t^{\epsilon}}{|s||s+1| \cdots |s+j-1|}.
\end{align*}
We observe that $\tilde{U}(s)$ is negligibly small if $\tau \gg t^{\epsilon}$. Hence we shall focus on the range $|\tau|\ll t^{\epsilon}$. We note that $\frac{\gamma_k(1/2 +i\tau)}{\gamma_k(1/2-i\tau)}$ is a  product of $4$ factors of the form $\frac{\Gamma(k_{j} +i\tau)}{\Gamma(k_{j}-i\tau)}$, where $k_{j}\backsim K $. Let $K/2 +i\tau= re^{i\theta}$ with $r=\sqrt{\frac{K^{2}}{4}+\tau ^{2}}$ and $\theta=\tan^{-1}(2\tau /K)$.  We obtain $\log r= \log K+O(\tau^{2}/K^2)$ and $\theta=\tau /K +O(\tau^3/K^3)$.
Using lemma \ref{stirling}, we obtain
\begin{align*}
\frac{\Gamma(K/2 +i\tau)}{\Gamma(K/2-i\tau)} &=\exp\left\lbrace (K/2+i\tau -1/2)\log(re^{i\theta}) -(K/2+i\tau)-(K/2-i\tau -1/2)\log(re^{-i\theta})  \right. \\ 
  & \hspace{7cm} \left. +K/2+i\tau +O(\tau/K)\right\rbrace \\
&=\exp\left\lbrace (K/2+i\tau -1/2)(\log K+O(\tau^2 /K^2) +i(\tau/K +O(\tau^3/K^3))) \right. \\
& \left. \hspace{10pt} -(K/2+i\tau)-(K/2-i\tau -1/2)( \log K+O(\tau^2 /K^2) +i(\tau/K +O(\tau^3/K^3)))    \right\rbrace \\
&=\exp(2i\tau \log K -i\tau /K -i \tau +O(\tau^2/K^2).
\end{align*} 
 
 We observe that oscillations with respect to $k$ are given by $(k/2)^{i\tau}$. Since $\tau \ll t^{\epsilon}$, we can ignore the oscillations with respect to $k$ and replace $W$ by $W_{1}$ such that $W_{1} ^{(j)}(x)\ll t^{j \epsilon}$. We record the above result in the following lemma. 
\begin{lemma} \label{lemma F times f}
Let $F$ and $f$ be as above. We have
\begin{align*} 
&S_3:= \mathop{\sum \sum}_{m, \ell=1}^\infty \lambda_f(m)\lambda_F(m)\psi(\ell) U\left(\frac{m \ell^{2}}{N}\right) = \eta i^{-2k} N^{1/2} \epsilon_{\psi}^{2}\overline{\lambda_f(q^{2}) } \sum_{\mathcal{U}}\mathop{\sum \sum}_{m, \ell=1}^\infty \frac{\lambda_f(m)\lambda_F(m)}{\sqrt{m\ell ^{2}}}\psi(\ell)   \\
 & \hspace{3cm}\times U\left(\frac{m \ell^{2}}{\tilde{N}}\right)\frac{1}{2 \pi } \int_{(1/2)}\tilde{U}(s)\left(\frac{Nm\ell^{2}}{(q)^{2}}\right)^{it}\frac{\gamma_k(1-s)}{\gamma_k(s)})dt +O(t^{-2018}) \\
 &\hspace{2cm} = \eta i^{-2k} \left( \frac{N}{\tilde{N}}\right)^{1/2} \epsilon_{\psi}^{2}\overline{\lambda_f(q^{2}) } \sum_{\mathcal{U}}\mathop{\sum \sum}_{m, \ell=1}^\infty \lambda_f(m)\lambda_F(m)    W_1\left(\frac{m \ell^{2}}{\tilde{N}}\right) +O(t^{-2018}),
\end{align*}
  where  $W_{1}^{(j)}(x)\ll_{j} t^{\epsilon j}$.
\end{lemma}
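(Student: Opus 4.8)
The plan is to realize $S_3$ as a contour integral, push the functional equation of the Rankin--Selberg $L$-function through it, and then clean up the archimedean factors. First I would apply Mellin inversion to $S_3$, writing
\[
S_3 \;=\; \frac{1}{2\pi i}\int_{(\sigma)} \tilde U(s)\, N^{s} \mathop{\sum \sum}_{m,\ell\geq 1} \frac{\lambda_f(m)\lambda_F(m)\psi(\ell)}{(m\ell^{2})^{s}}\, ds \;=\; \frac{1}{2\pi i}\int_{(\sigma)} \tilde U(s)\, N^{s}\, L(s,F\otimes f)\, ds ,
\]
valid for $\sigma$ large since the $\psi$-twisted Rankin--Selberg series converges absolutely there, and $\tilde U$ decays faster than any polynomial on vertical lines. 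Next I would invoke the functional equation of $L(s,F\otimes f)$ twisted by the primitive character $\psi$ mod $q$ (as recalled above), which exchanges $s\leftrightarrow 1-s$ and produces the Gauss-sum factor $g_\psi^{2}/q$, a root number, the level factor $1/\lambda_f(q^{2})$, and the ratio $\gamma_k(1-s)/\gamma_k(s)$ of four archimedean Gamma factors with parameters of size $\asymp K$. Substituting this and expanding the dual $L$-function into its Dirichlet series introduces the dual variables and reduces the effective length from $N$ to $\tilde N\asymp Q^{2}K^{4}/N$, which is the source of the saving recorded in Step~3 of the sketch.

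I would then move the contour. For a dyadic parameter $\tilde N$ outside the window $\asymp Q^{2}K^{4}/N$ the contribution is $O(t^{-2018})$: shifting far to the left, the rapid decay of $\tilde U$ against the controlled size of the archimedean factors kills the range $\tilde N \gg Q^{2}K^{4}t^{\epsilon}/N$, while shifting to $\Re s=(K-2)/2$ kills the range $\tilde N \ll Q^{2}K^{4}t^{-\epsilon}/N$, using $K\gg t^{1/3-\epsilon}$. What remains is organised via the smooth dyadic partition of unity $\mathcal U$. Finally I would shift to the central line $\Re s=1/2$ to cancel the oscillation of the Gamma factors: since $\tilde U(1/2+i\tau)\ll_j t^{\epsilon}\,|1/2+i\tau|^{-j}$, only $|\tau|\ll t^{\epsilon}$ contributes, and writing $\gamma_k(1/2+i\tau)/\gamma_k(1/2-i\tau)$ as a product of four ratios $\Gamma(k_j+i\tau)/\Gamma(k_j-i\tau)$ with $k_j\asymp K$, I would apply Stirling's formula (Lemma~\ref{stirling}) with $K/2+i\tau=re^{i\theta}$, $r=\sqrt{K^{2}/4+\tau^{2}}$, $\theta=\arctan(2\tau/K)$, so that $\log r=\log K+O(\tau^{2}/K^{2})$, $\theta=\tau/K+O(\tau^{3}/K^{3})$, and each ratio equals $\exp\!\big(2i\tau\log K+O(\tau/K)+O(\tau^{2}/K^{2})\big)$. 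The only surviving $k$-dependence is the factor $(k/2)^{i\tau}$, which for $\tau\ll t^{\epsilon}$ oscillates at most like $t^{\epsilon}$; absorbing it together with $\tilde U(1/2+i\tau)$, the $\tau$-integral, the factor $(Nm\ell^{2}/q^{2})^{it}$, and the shape $U(x)/\sqrt{x}$ into a single weight $W_1$ with $W_1^{(j)}(x)\ll_j t^{j\epsilon}$, and pulling $(N/\tilde N)^{1/2}$ out of the $m^{-1/2}\ell^{-1}$ normalisation (using $m\ell^{2}\asymp\tilde N$), yields the asserted identity, with $\eta\,i^{-2k}$ collecting the remaining constants.

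The step I expect to be the main obstacle is the uniform control of the archimedean ratio $\gamma_k(1-s)/\gamma_k(s)$: one must track not only its size and $\tau$-derivatives but also its joint behaviour in $k\sim K$, so that after the contour shifts and the absorption step the resulting weight $W_1$ genuinely satisfies $W_1^{(j)}\ll_j t^{j\epsilon}$ uniformly in all parameters and the $k$-oscillation really is as mild as claimed. A secondary technical point is keeping the $q$-dependent quantities ($g_\psi^{2}$, $\epsilon_\psi^{2}$, $\lambda_f(q^{2})$) correctly matched across the functional equation, since the conductor of $F\otimes f$ involves $q$, so that the final constant comes out exactly as $\eta\,i^{-2k}(N/\tilde N)^{1/2}\epsilon_\psi^{2}\,\overline{\lambda_f(q^{2})}$.
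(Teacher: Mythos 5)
Your proposal follows essentially the same route as the paper: Mellin inversion, the Kowalski--Michel--Vanderkam functional equation for $L(s,F\otimes f)$, contour shifts (far left for $\tilde N\gg Q^{2}K^{4}t^{\epsilon}/N$, to $\Re s=(K-2)/2$ for $\tilde N\ll Q^{2}K^{4}t^{-\epsilon}/N$ using $K\gg t^{1/3-\epsilon}$) together with the dyadic partition $\mathcal U$, and then a shift to $\Re s=1/2$ with Stirling applied to $\Gamma(k_j+i\tau)/\Gamma(k_j-i\tau)$ on $|\tau|\ll t^{\epsilon}$ to absorb the mild $(k/2)^{i\tau}$ oscillation into a weight $W_1$ with $W_1^{(j)}\ll_j t^{j\epsilon}$. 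This matches the paper's argument, including the identification of the uniform control of $\gamma_k(1-s)/\gamma_k(s)$ as the key technical point, so no further comparison is needed.
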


 Let $m=m^{\prime}q^{\nu}$,  where $(m^{\prime},q)=1$.
 Then \begin{equation*}
 \overline{\lambda_{f}(mq^{2})}=\overline{\lambda_{f}(m^{\prime})} \  \overline{\lambda_{f}(q^{2+\nu})}=\overline{\psi(m^{\prime})}\lambda_{f}(m^{\prime}) \ \overline{\lambda_{f}(q^{2+\nu})}.
 \end{equation*}
Using the above expression and Lemma \ref{lemma F times f}  in \eqref{sumit}, we obtain      
\begin{align*}
\mathcal{F} &=\eta  \left( \frac{N}{\tilde{N}}\right)^{1/2}  \sum_{k\sim K} i^{-2k} W\left(\frac{k-1}{K}\right) \sideset{}{^\dagger}\sum_{ \psi (\textrm{mod}\ q)}\epsilon_{\psi}^{2} \sum_{f\in H_k(q,\Psi)}\omega_f^{-1}\sum_{\nu=0}^{\infty}\mathop{\sum \sum}_{,m^{\prime}, \ell=1}^\infty \lambda_f(m^{\prime})\overline{\lambda_F(m)}\overline{\psi}(\ell m^{\prime})    \\
 &  \hspace{3cm}\times W_{1} \left(\frac{m^{\prime} q^{\nu} \ell^2}{\tilde{N}} \right) \sum_{n=1}^\infty \overline{\lambda_f(nq^{2+\nu})} n^{it} V \left(\frac{n}{N} \right).
\end{align*}
Now applying the Petersson trace formula, we obtain

\begin{align*}
\mathcal{F} &=\eta  \left( \frac{N}{\tilde{N}}\right)^{1/2}    \sum_{k\sim K} i^{-2k} W\left(\frac{k-1}{K}\right) \sideset{}{^\dagger}\sum_{ \psi (\textrm{mod}\ q)}\epsilon_{\psi}^{2} \sum_{\nu=0}^{\infty} \mathop{\sum \sum}_{m^{\prime}, \ell=1}^\infty \lambda_F(m)\overline{\psi}(\ell m^{\prime})  W_{1} \left(\frac{m^{\prime} q^{\nu} \ell^2}{\tilde{N}} \right)   \\
 &  \hspace{1cm} \times \sum_{n=1}^\infty  n^{it} V \left(\frac{n}{N} \right) \left\lbrace  \delta(m^{\prime},nq^{2+\nu})+2\pi i^{-k} \sum_{c=1}^\infty \frac{S_{\psi} (nq^{2+\nu},m^{\prime},cq)}{cq} J_{k-1} \left(\frac{4\pi\sqrt{m^{\prime}q^{\nu}n}}{c}\right) \right\rbrace .
\end{align*}
If $m^{\prime}=nq^{2+\nu}$ $\Rightarrow \psi(m^{\prime} \ell)= \psi(nq^{2+\nu}\ell)=0$, as $\psi$ is a character mod $q$. Hence diagonal term vanishes. From now on, we shall consider the dual off diagonal term, which is given by (with a constant multiple of  $\eta$)
\begin{align} \label{o star before proposition}
\mathcal{O^{*}} &= \left( \frac{N}{\tilde{N}}\right)^{1/2}     \sum_{k\sim K} i^{-2k} W\left(\frac{k-1}{K}\right) \sideset{}{^\dagger}\sum_{ \psi (\textrm{mod}\ q)}\epsilon_{\psi}^{2}\sum_{\nu=0}^{\infty} \mathop{\sum \sum}_{m^{'}, \ell=1}^\infty \lambda_F(m)\overline{\psi}(\ell m^{'})  W_{1} \left(\frac{m^{'} q^{\nu} \ell^2}{\tilde{N}} \right)  \notag \\
 &  \hspace{3cm}\times \sum_{n=1}^\infty  n^{it} V \left(\frac{n}{N} \right)2\pi i^{-k} \sum_{c=1}^\infty \frac{S_{\psi} (nq^{2+\nu},m^{'},cq)}{cq} J_{k-1} \left(\frac{4\pi\sqrt{m^{'}q^{\nu}n}}{c}\right) .
\end{align} 
Next, we shall prove the following proposition. 

\begin{proposition} \label{proposition o star}
Let $ \mathcal{O^{*}} $ be as above. We have
\begin{align*}
\mathcal{O^{*}} \ll \sqrt{N} Q K^2 \left(1   + \frac{\sqrt{t}}{K^{3/2}}\right).
\end{align*}
\end{proposition}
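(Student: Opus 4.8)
The plan is to estimate $\mathcal{O}^{*}$ in \eqref{o star before proposition} by executing Steps 5--7 of the sketch in order. I begin with the $k$-sum. By Lemma \ref{sum over k} applied with $x = 2\sqrt{m'q^{\nu}n}/c$, the weighted sum $\sum_{k\sim K} i^{-k} W((k-1)/K) J_{k-1}(2\pi x)$ becomes, after a change of variable, an integral $\int_{\mathbb{R}} \hat{W}(v) e(\pm x \cos(2\pi v/K))\,dv$; since $\hat{W}$ decays rapidly we may insert the cutoff $F$ supported on $|v|\ll t^{\epsilon}$. The range relevant here is $x \gg K^{2}t^{\epsilon}$ (this is where $\tilde{N}\asymp Q^{2}K^{4}/N$ and the constraint $C\sim Q$ come from: $x = 2\sqrt{m'q^{\nu}n}/c \asymp \sqrt{\tilde{N}N}/(cQ^{\nu/2}) \asymp QK^{2}/c$, forcing $c\ll Q$). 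On the complementary range I Taylor-expand $\cos(2\pi v/K)$, retain the first two terms (legitimate since $x \ll QK^{2}$), and apply the stationary phase expansion of Lemma \ref{exponential inte}: the $+$ sign gives no stationary point and hence a negligible contribution, while the $-$ sign produces a main term of shape $e(\pm x)\,(K/\sqrt{x})$ times a mildly oscillating factor. This replaces the $k$-sum by a factor of size $\sqrt{K}$ saving, and the Bessel function disappears, leaving a Kloosterman sum $S_{\psi}(nq^{2+\nu},m',cq)$ with a phase $e(\pm 2\sqrt{m'q^{\nu}n}/c)$.

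Next I carry out the $\psi$-sum. The outer sum $\sideset{}{^\dagger}\sum_{\psi\,(\mathrm{mod}\,q)} \epsilon_{\psi}^{2}\,\overline{\psi}(\ell m')\,S_{\psi}(nq^{2+\nu},m',cq)$ should be opened via the definition of the twisted Kloosterman sum and the Gauss-sum relation $\epsilon_{\psi}^{2} = g_{\psi}^{2}/q$; summing $\overline{\psi}$ against $\psi$-factors collapses to the trivial character up to a factor $\phi(q)/q$ and produces a factorization of the modulus $cq$ into a classical Kloosterman sum $S(n,m;c)$ times an additive character $e(\pm\overline{c\ell}/q)$ (this is the standard reciprocity/CRT manipulation, as in \cite{AS} and Munshi's work). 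After this step one arrives at the displayed expression
\begin{align*}
\mathcal{O}^{*} = \sqrt{\tfrac{N}{\tilde{N}}}\,\tfrac{\phi(q)}{q}\mathop{\sum\sum}_{m,\ell}\lambda_{F}(m)\,U\!\left(\tfrac{m\ell^{2}}{\tilde{N}}\right)\sum_{n}n^{it}W\!\left(\tfrac{n}{N}\right)\sum_{c\ll Q/\ell}\tfrac{S(n,m;c)}{c}\,e\!\left(\pm\tfrac{\overline{c\ell}}{q}\pm\tfrac{\sqrt{nm}}{c}\right),
\end{align*}
with a total saving of $\frac{N}{K\sqrt{Q}}$ from this step (that is, $\sqrt{K}$ from the $k$-sum and $\sqrt{Q}$ from the $\psi$-sum, against the $\sqrt{QK}$ normalization coming from the second Petersson application). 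I would then apply Poisson summation to the $n$-sum: the modulus is $c$, the analytic conductor is $t$ (from $n^{it}$ combined with the $\sqrt{nm}/c$ phase), so the dual variable is supported on $n\ll ct/N$, and the resulting integral $I(m,n,c)$ is controlled by Lemma \ref{exponential inte}. This yields the bound for $\mathcal{O}^{*}$ in terms of a double sum over $m,\ell$ of absolute values times $|\sum_{c\sim C}\sum_{n\ll ct/N} c^{-1} e(-m\overline{n}/c)\,I(m,n,c)|$.

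The endgame is Cauchy--Schwarz followed by a second Poisson. Applying Cauchy--Schwarz in the $m$-variable to remove $\lambda_{F}(m)$, using Lemma \ref{rankin Selberg bound} to bound $\sum_{m\sim\tilde{N}}|\lambda_{F}(m)|^{2}\ll \tilde{N}^{1+\epsilon}$, and opening the square gives the quantity $\mathcal{O}_{2}^{\star}$ displayed in Step 7. I then Poisson-sum the inner $m$-sum modulo $c_{1}c_{2}$: the analytic conductor is $K^{2}$ (the size of $\tilde{N}$ over the relevant parameters, coming from the oscillation of $I$), so the dual $m$-variable splits into a diagonal contribution (dual frequency zero, saving $Q^{2}t/N$, i.e. the length $\tilde{N}$ collapsing against the $n_{i}$-ranges) and an off-diagonal contribution where the congruence $\overline{n_{1}}c_{2}\equiv\overline{n_{2}}c_{1}\pmod{c_{1}c_{2}}$ gives an extra $\sqrt{c_{1}c_{2}}$ and the dual sum has length $\tilde{N}/K^{2}$, for a total off-diagonal saving of $\tilde{N}/K = Q^{2}K^{3}/N$. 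Collecting all savings gives $\min\{Q^{2}t/N,\,Q^{2}K^{3}/N\}$ in the last step; combined with the running saving $N^{2}/(QK\sqrt{t})$ this produces $|\mathcal{O}^{*}|\ll \sqrt{N}QK^{2}(1 + \sqrt{t}/K^{3/2})$ after inserting $\tilde{N}\asymp Q^{2}K^{4}/N$ and $C\sim Q$, and optimizing at $K = t^{1/3}$. \textbf{The main obstacle} I anticipate is the bookkeeping in the second Poisson step: correctly identifying the analytic conductor $K^{2}$ of the integral $I(m,n_{1},c_{1})I(m,n_{2},c_{2})$ as a function of $m$ (this requires a careful stationary-phase analysis of the product, since the two factors have opposite-sign phases and could partially cancel or reinforce), and then handling the off-diagonal character sum $\sum_{m}e((-\overline{n_{1}}c_{2}+\overline{n_{2}}c_{1})m/(c_{1}c_{2}))$ together with the arising congruence so as to genuinely extract the claimed $\sqrt{c_{1}c_{2}}$. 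Verifying that the diagonal term $n_{1}c_{2}\equiv n_{2}c_{1}$ really contributes only the stated $Q^{2}t/N$-sized main term, and that no secondary main terms survive, is the delicate point on which the whole Weyl exponent hinges.
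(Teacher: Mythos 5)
Your outline reproduces the paper's own route step for step (the $k$-sum via Lemma \ref{sum over k} and stationary phase, the evaluation of the $\psi$-sum giving $\phi(q)/q$ and $e(\pm\overline{c\ell}/q)$, Poisson in $n$ with dual length $ct/N$, Cauchy--Schwarz in $m$, and a second Poisson modulo $c_1c_2$), so there is no divergence of method. The problem is that, as a proof of Proposition \ref{proposition o star}, it stops exactly where the proposition's content begins: everything you write is the Section 2 sketch restated, and the quantitative estimates that actually yield $\sqrt{N}QK^2\left(1+\sqrt{t}/K^{3/2}\right)$ are the items you defer as an ``anticipated obstacle''. Concretely, the paper's proof rests on three inputs you never establish: (i) the second-derivative bound $I(m,n,c)\ll t^{-1/2}$ for the $n$-integral (phase $\frac{t}{2\pi}\log v+(\sqrt{mNv}-nNv)/c$, second derivative $\asymp t$), which gives $\mathcal{J}(0)\ll t^{-1}$ and hence the diagonal contribution $\ll \tilde{N}/N$; (ii) the bound $\mathcal{J}(m)\ll (tK)^{-1}$ for $m\neq 0$, obtained by substituting $y_1=x_1^2$, $y_2=x_2^2$, $v=x_3^2$ in \eqref{definition j m}, running stationary phase successively in $x_1$ and $x_2$ (each second derivative of size $t$, contributing $t^{-1/2}$ apiece), and then applying the second-derivative bound in $x_3$, whose phase has second derivative $m\tilde{N}/(c_1c_2)\asymp K^2$ in the surviving range $m\sim N/K^2$, contributing the decisive extra factor $K^{-1}$; and (iii) the exact evaluation $\mathcal{C}(m)=c_1c_2\,\mathbbm{1}(\overline{n}_1c_2-\overline{n}_2c_1\equiv m\ (c_1c_2))$ and the attendant count of admissible $(n_1,n_2,m)$, which is what stands behind your heuristic ``extra $\sqrt{c_1c_2}$''. (A small bookkeeping slip: the dual $m$-range is $m\ll Nt^\epsilon/K^2$, not $\tilde{N}/K^2$; the two agree only because $QK^2\asymp N$.)

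Point (ii) is not routine tidying-up but the heart of the matter, and it is precisely the ``partial cancellation or reinforcement'' issue you flag without resolving. If one only uses the bound $\mathcal{J}(m)\ll t^{-1}$ coming from the two $t$-stationary points and ignores the $m$-dependent quadratic phase in the $x_3$ (equivalently $v$) variable, the off-diagonal term becomes $\tilde{N}t/(NK^2)$ instead of $\tilde{N}t/(NK^3)$, the resulting bound is $\mathcal{O}^{*}\ll \sqrt{N}QK\sqrt{t}$, and after dividing by the normalisation $\sqrt{N}QK$ one recovers only the convexity bound $t^{1/2}$: no choice of $K$ then gives the Weyl exponent. So to complete the argument you must carry out the three-fold stationary-phase analysis of $\mathcal{J}(m)$, verify that after removing the $x_1$- and $x_2$-stationary points the remaining $x_3$-phase is still dominated by $-m\tilde{N}x_3^2/(c_1c_2)$ (i.e.\ that $G_1(x_1^0)+G_2(x_2^0)$ does not cancel this quadratic term), and then assemble the diagonal and off-diagonal counts as in \eqref{diagonal} and \eqref{non diagonal} to reach $\mathcal{O}^{*}\ll N^{3/2}\left(\tilde{N}/N+\tilde{N}t/(NK^3)\right)^{1/2}$, which is the claimed bound.
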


To prove our theorem, it is enough to prove the above proposition. Because, when $K= t^{1/3}$, we obtain $ \mathcal{O^{*}} \ll \sqrt{N} Q K t^{1/3}$, which implies that 
\begin{align*}
L\left(\frac{1}{2} + it, F \right) \ll \frac{|S(N)|}{\sqrt{N}} \ll \frac{\mathcal{O^\star}}{\sqrt{N} QK} \ll t^{\frac{1}{3} + \epsilon} .
\end{align*}

\section{Analysis of dual off-diagonal }
We now consider sum over $k$.Using Lemma \ref{sum over k} with $x= 2 \frac{\sqrt{m^\prime q^\nu n}}{c}$, we have
\begin{align*}
	S_{4} & = \sum_{k} i^{-k} W\left(\frac{k-1}{K}\right) J_{k-1}\left(\frac{4\pi \sqrt{m^{\prime} q^{\nu}n}}{c}\right)
= \iint_{\mathbb{R}^{2}} W\left(u \right) F\left(v\right) e\left(uv\pm x \  \cos\frac{2\pi v}{K} \right)du dv. 
\end{align*} The above integral is negligibly small if $c\gg \frac{Q t^{\varepsilon}}{l}$. So effective range of $c$ is given by $c\ll \frac{Q t^{\varepsilon}}{l}$. Since $\cos\left(y\right) =1-\frac{y^{2}}{2} + O\left(y^{4}\right)$ and $u\ll t^{\varepsilon}$,  we obtain
\begin{align*}
S_{4} = e\left(x\right)\int_{\mathbb{R}} W\left(u \right)\int_{\mathbb{R}} F\left(v\right) e\left( uv\pm \frac{x \pi^2 v^{2}}{k^{2}}\right)\left(1+ O\left(\frac{1}{k^{4}}\right)\right) dv  du.
\end{align*}
Let $G\left(v\right)=uv\pm \frac{x \pi^{2} v^{2}}{k^{2}}$. If we choose positive sign in  $G$, then there is no stationary point, so the above integral is negligibly small. From now on, we shall consider $G$ with negative sign. If $G^{\prime}\left(v_{0}\right) =0$, then $v_{0}=\frac{uk^{2}}{4\pi x} \asymp \frac{K^{2}}{x}$; $G^{\prime\prime}\left(v\right)=\pm\frac{4x\pi}{k^{2}}$  and $G^{\left(j\right)}\left(v\right)=0$, for $ j\ge 3$. Applying  Lemma  \ref{exponential inte}, we obtain    

\begin{align*}
S_{4} & = e(x) \int_{\mathbb{R}} W\left(u \right)\frac{F\left(v_{0}\right)}{\sqrt{G^{\prime\prime}\left(v_{0}\right)}}  e\left(G(v_{0})+\frac{1}{8} \right)  du  + \textrm{errors}\\
&= \int_{\mathbb{R}} W\left(u \right)e\left(x + \frac{u^2 K^2}{ 8 \pi^2 x} \right) \frac{K}{\sqrt{x}} (1+ o(1)) du. 
\end{align*} Since $x \gg K^2 t^\epsilon$, we note that the second term in exponential is not oscillating with respect to $x$. We push that term in weight function.  We obtain
  \begin{align*}
  S_{\psi}\left(n q^{2+\nu},m^{\prime};cq\right)=S_{\psi}\left(0,m^{\prime}\overline{c};q\right) S\left(nq^{1+\nu},m^{\prime}\overline{q};c\right) =  \sqrt{q} \ \overline{\epsilon_{\psi}}\psi\left(m^{\prime}\overline{c}\right) S\left(n,m^{\prime}q^{\nu};c\right).
\end{align*}   
We shall now execute the sum over $\psi$, which is given by
 \begin{align*}
 \frac{1}{2}\sum_{\psi(q)}\left(1-\psi(-1)\right)\epsilon_{\psi}^{2}\overline{\epsilon_{\psi}}\psi\left(m^{\prime}\overline{c}\right)\overline{\psi(m^{\prime}l)}
	=\sideset{}{^\pm}\sum_{\psi(q)}\psi\left(\pm  \overline{cl}\right)\frac{1}{2 \sqrt{q}}\sum_{\alpha(q)}\psi(\alpha)e(\frac{\alpha}{q})
= \frac{1}{2 \sqrt{q} }e(\pm\overline{cl})\phi(q).
\end{align*} Substituting the above estimate in equation \eqref{o star before proposition}, we get
\begin{align*}
\mathcal{O^{*}} &=\sqrt{ \frac{N}{\tilde{N}}}  \frac{\phi(q)}{q}\mathop{\sum \sum}_{m, \ell=1}^\infty \lambda_F(m)  U \left(\frac{m \ell^2}{\tilde{N}} \right) \sum_{n}n^{it} W \left(\frac{n}{N} \right) \sum_{c\ll \frac{Q}{\ell}} \frac{S(n,m;c)}{c} e\left(\pm \frac{\overline{cl}}{q}\pm\frac{\sqrt{nm}}{c} \right).  
\end{align*} 

Now we consider the sum over $n$. Let 
\begin{align*}
S_{5}:= \sum_{n} n^{it} e\left(\frac{\sqrt{nm}}{c}\right)S(n,m;c) W \left(\frac{n}{N} \right). 
\end{align*} 
Substituting $n=\alpha+bc$, where  $0\le b<c$, we obtain
\begin{align*}
S_{5}&= \sum_{\alpha(c)} S(\alpha,m;c) \sum_{b} (\alpha+bc)^{it} e\left(\frac{\sqrt{m(\alpha+bc)}}{c}\right) W \left(\frac{\alpha+bc}{N} \right).
\end{align*}

Applying the Poisson summation formula  to the sum over $b$,  we obtain
\begin{align*}
S_{5}&= \sum_{\alpha(c)} S(\alpha,m;c) \sum_{n}\int_{\mathbb{R}} (\alpha+yc)^{it} e\left(\frac{\sqrt{m(\alpha+yc)}}{c}\right) W \left(\frac{\alpha+yc}{N} \right)e\left(-ny\right) dy.
\end{align*}
By the change of variable $v=\frac{\alpha+yc}{N}$,  $dy=\frac{N}{c} dv$, we obtain
\begin{align*}
S_{5}&=\frac{N^{1+it}}{c}\sum_{n}\sum_{\alpha(c)}S(\alpha,m;c)e\left(\frac{n\alpha}{c}\right)\int_{\mathbb{R}} v^{it}W(v)e\left(\frac{\sqrt{mNv}-nNv}{c}\right) dv \\
&=\sum_{n} \mathcal{C}(m,c)I(m,n,c) ,
\end{align*} where $\mathcal{C}(m,c)$ is the character sum and $I(m,n,c)$ is the integral in the above equation. Integrating by parts, we observe that $I(m,n,c)\ll_{j}\left(t+\frac{\sqrt{mN}}{c}\right)^{j} (\frac{c}{nN})^{j}$. We choose the parameter $K$ such that $K\ll t^{1/2-\delta}$. By this choice of K, we obtain $I(m,n,c)\ll_{j}\left( \frac{ct}{nN}\right)^{j}$. We observe that the integral $I(m,n,c)$ is negligibly small if $n\gg \frac{ct^{1+\varepsilon}}{N}$.  Now we consider the character sum $\mathcal{C}(m,c)$, which is given by
\begin{align*}
\mathcal{C}(m,c)=\sum_{\alpha(c)}\sum_{\beta(c)}e\left(\frac{\alpha\beta+m\overline{\beta} +n\alpha}{c} \right)
=\sum_{\beta(c)}e\left(\frac{m\overline{\beta}}{c} \right) \sum_{\alpha(c)} e\left(\frac{\alpha(n+\beta)}{c} \right)
=c\ e\left(\frac{-m \overline{n}}{c} \right).
\end{align*}
Substituting the above estimates for $\mathcal{C}(m,c) $ and $ I(m,n,c)$, we obtain
\begin{align*}
S_{5}=N^{1+it}\sum_{n\ll\frac{Qt}{N}}e\left(\frac{-m \overline{n}}{c} \right)I(m,n,c).
\end{align*}

Now we analyse the integral  $ I(m,n,c)$. We have 
\begin{align*}
I(m,n,c)=\int_{\mathbb{R}}W(v)e\left( \frac{t \log y}{2\pi}+\frac{\sqrt{mNv}-nNv}{c}\right) dv :=\int_{\mathbb{R}}W(v)e(G_{1}(v))dv,
\end{align*}
We note that  $G_{1}^{\prime\prime}(v)=-\frac{t}{2\pi v^{2}} \ -\frac{3\sqrt{mN}}{4cv^{3/2}} \Rightarrow | G_{1}^{\prime\prime}(v)| \asymp t$. By Lemma \ref{second deri bound}, we obtain $I(m,n,c)\ll \frac{1}{\sqrt{t}}$. Substituting the estimate for $S_5$ and using $\phi(q)/q <1$, we  obtain
\begin{align} \label{o star after s5}
\mathcal{O^{*}} &\ll \left( \frac{N}{\tilde{N}}\right)^{1/2}   \mathop{\sum \sum}_{m, \ell =1}^\infty |\lambda_F(m)\overline{\psi}(l ) W\left(\frac{ml^{2}}{\Tilde{N}}\right)|\ \ |\sum_{c\sim C} \sum_{n\ll\frac{ct}{N}} \frac{1}{c} e\left(\frac{-m \overline{n}}{c} \right)I(m,n,c)|.
\end{align} 
  For simplicity, we shall consider the case  $l=1$ (Estimates for the other values of $\ell$ are similar). On applying the Cauchy-Schwartz inequality in equation \eqref{o star after s5}, we obtain 
\begin{align} \label{definition O 2 star}
\mathcal{O^{*}} & \ll \left( \frac{N}{\tilde{N}}\right)^{1/2}     (\tilde{N})^{1/2}N  \left(\sum_{m\sim \tilde{N}} W(\frac{m}{\tilde{N}})\ \  |\sum_{c\sim C}  \sum_{n\ll\frac{c t}{N}} \frac{1}{c} e\left(\frac{-m \overline{n}}{c} \right)I(m,n,c)|^{2}  \right)^{1/2} \notag\\ 
&:=   N^{3/2}  (\mathcal{O}_2^\star)^\frac{1}{2}.
\end{align} Opening the absolute square and interchanging the order of summation, we obtain
\begin{align}\label{O 2 star}
\mathcal{O}_2^\star =\mathop{\sum \sum}_{c_{1},c_{2} \sim Q} \frac{1}{c_1 c_2} \mathop{\sum \sum}_{n_i \sim \frac{c_i t}{N}}  \sum_{m\sim \tilde{N}} e\left(\frac{-m \overline{n_{1}}}{c_{1}} +\frac{m \overline{n_{2}}}{c_{2}} \right)I(m,n_{1},c_{1})I(m,n_{2},c_{2}) U\left(\frac{m}{\tilde{N}}\right).
\end{align}
We now apply the  Poisson summation formula to the sum over $m$ with modulus $c_{1}c_{2}$. Writing $m=\beta +bc_{1}c_{2}$, we get
\begin{align*}
S_{6} &: =\sum_{\beta(c_{1}c_{2})}e\left(\frac{-\beta \overline{n_{1}}}{c_{1}} +\frac{\beta \overline{n_{2}}}{c_{2}} \right)\sum_{b}I(\beta +bc_{1}c_{2},n_{1},c_{1})I(\beta +bc_{1}c_{2},n_{2},c_{2}) U\left(\frac{\beta +bc_{1}c_{2}}{\tilde{N}}\right) \\ 
& =\sum_{\beta(c_{1}c_{2})}e\left(\frac{-\beta \overline{n_{1}}}{c_{1}} +\frac{\beta \overline{n_{2}}}{c_{2}} \right)\sum_{m}\int_\mathbb{R} I(\beta +uc_{1}c_{2},n_{1},c_{1})I(\beta +uc_{1}c_{2},n_{2},c_{2}) \\
 &  \hspace{3cm}\times   U\left(\frac{\beta +uc_{1}c_{2}}{\tilde{N}}\right)e(-mu)du.
\end{align*}
Substituting $v=\frac{\beta +uc_{1}c_{2}}{\tilde{N}}$, we obtain 
\begin{align}\label{S_6}
	S_{6}=\frac{\tilde{N}}{c_{1}c_{2}} \sum_{m}\mathcal{C}(m) \mathcal{J}(m),
\end{align}

where the character sum   $\mathcal{C}(m)=\sum_{\beta(c_{1}c_{2})}e\left(\frac{-\beta \overline{n_{1}}}{c_{1}} +\frac{\beta \overline{n_{2}}}{c_{2}}+\frac{m\beta}{c_{1}c_{2}} \right)$  and the integral 
\begin{align} \label{definition j m}
 \mathcal{J}(m)&:= \mathcal{J}(m; n_1, n_2, c_1, c_2) :=\int_{\mathbb{R}}I(v\tilde{N},n_{1},c_{1})I(v\tilde{N},n_{2},c_{2}) U\left(v\right)e(-mv)dv \notag\\
&=\iint_{\mathbb{R}^2} W(y_{1})W(y_{2})\left(\frac{y_{1}}{y_{2}}\right)^{it} e\left(\frac{-Nn_{1}y_{1}}{c_{1}} +\frac{Nn_{2}y_{2}}{c_{2}} \right) \notag \\
   & \hspace{2cm}\times  \left\lbrace \int_\mathbb{R}U(u)e\left(-\frac{\sqrt{N\tilde{N}y_{1}v}}{c_{1}}+\frac{\sqrt{N\tilde{N}y_{2}v}}{c_{2}}-\frac{m\tilde{N}v}{c_{1}c_{2}}\right)dv\right\rbrace dy_{1}dy_{2}.
\end{align} Integrating by parts $j$-times with respect to the variable $v$, we obtain

\begin{align*}
\mathcal{J}(m) &\ll_{j} \left(1+\frac{\sqrt{N\tilde{N}}}{c_{1}}+\frac{\sqrt{N\tilde{N}}}{c_{2}} \right)^{j} \left(\frac{c_{1}c_{2}}{m\tilde{N}} \right)^{j} \ \ \ \ll_{j}\left(\frac{N}{mK^{2}}\right).
\end{align*}
So the integral $\mathcal{J}(m)$ is negligibly small if $m\gg \frac{Nt^{\varepsilon}}{K^{2}}$. For $m=0$, using the bound  $I(m, n; c) \ll t^{-1/2}$, we obtain 
\begin{equation} \label{bound j 0}
\mathcal{J}(0) \ll t^{-1}. 
\end{equation} For $m \neq 0$,  changing the variables $y_1 = x_1^2$, $y_2 = x_2^2$, and $v = x_3^2$ in the equation \eqref{definition j m},  we obtain 
\begin{align*}
\mathcal{J}(m)&=\iiint_{\mathbb{R}^3} x_1 W(x_1) x_2 W(x_2) x_3 W(x_3) \exp( i G(x_1, x_2, x_3))dx_1dx_2dx_3, 
\end{align*} where 
\begin{align*} 
 G:=  
2 t \log x_1 - 2t \log x_2 - \frac{N n_1}{c_1} x_1^2 + \frac{N n_2}{c_2} x_2^2 - \frac{\sqrt{N \tilde{N}}}{c_1} x_1 x_3 + \frac{\sqrt{N \tilde{N}}}{c_2} x_2 x_3  - \frac{m \tilde{N}}{c_1 c_2} x_3^2. 
\end{align*} 
We apply \ref{exponential inte} in  $x_1$ variable first.
We have
\begin{align*}
\mathcal{J}(m)&=\iint_{\mathbb{R}^2}   W_1(x_2) W_2(x_3) \exp( i \tilde{G}( x_2, x_3))\int_{\mathbb{R}} W_1(x_1)\exp(iG_1(x_1))dx_1dx_2dx_3. 
\end{align*} 
where
\begin{align*}
\tilde{G}( x_2, x_3)= - 2t \log x_2 + \frac{N n_2}{c_2} x_2^2  + \frac{\sqrt{N \tilde{N}}}{c_2} x_2 x_3  - \frac{m \tilde{N}}{c_1 c_2} x_3^2. 
\end{align*}
and 
\begin{align*} 
 G_1(x_1) =  2 t \log x_1  - \frac{N n_1}{c_1} x_1^2  - \frac{\sqrt{N \tilde{N}}}{c_1} x_1 x_3 . 
\end{align*}
Let $x_1^0$ be the stationary point of $G_1(x_1)$.
Applying Lemma \ref{exponential inte}, we obtain: 
\begin{align*}
\mathcal{J}(m) \rightsquigarrow \iint_{\mathbb{R}^2}   W_2(x_2)  W_3(x_3) \exp( i \tilde{G}( x_2, x_3))\sqrt{2\pi}\frac{\exp(i\frac{\pi}{4}+ iG_1(x_1^0))}{\sqrt{|G_1^{(2)}(x_1^0)|}}dx_2dx_3. 
\end{align*} 
We note that 
\begin{align*}
    G_1^{(2)} (x_1)= -\frac{2 t}{x_1^2} \Rightarrow \left|G_1^{(2)}(x_1^0) \right| \asymp t.
\end{align*}
Similarly, applying the Lemma \ref{exponential inte}  in $x_2$ variable, we obtain:
\begin{align*}
\mathcal{J}(m)\rightsquigarrow\int_{\mathbb{R}}   W_3(x_3) \exp( i G_{3}( x_3))\sqrt{2\pi}\frac{\exp(i\frac{\pi}{4}+ iG_2(x_2^{0}))}{\sqrt{|G_2^{(2)}(x_2^{0})|}} \sqrt{2\pi}\frac{\exp(i\frac{\pi}{4}+ iG_1(x_1^{0}))}{\sqrt{|G_1^{(2)}(x_1^{0})|}}dx_3,
\end{align*} where 
\begin{align*}
G_3(x_3)=- \frac{m \tilde{N}}{c_1 c_2} x_3^2, \ \ \ \ 
G_2(x_2) =  -2 t \log x_2  + \frac{N n_2}{c_2} x_2^2  + \frac{\sqrt{N \tilde{N}}}{c_2} x_2 x_3 . 
\end{align*} 
 and 
 $x_{2}^{0} $ is the stationary point of $G_2(x_2)$.
 Like before, We note that $G_2^{(2)}(x_2^{0})$ is of size $t$ .
Thus, 
\begin{equation}\label{final j}
\mathcal{J}(m) \rightsquigarrow \int_{\mathbb{R}}   W_3(x_3) \exp( i G_{4}( x_3))\sqrt{2\pi}\frac{\exp(i\frac{\pi}{4})}{\sqrt{|G_2^{(2)}(x_2^{0})|}} \sqrt{2\pi}\frac{\exp(i\frac{\pi}{4})}{\sqrt{|G_1^{(2)}(x_1^{0})|}}dx_3.
\end{equation} 
where 
\begin{align*}
 G_{4}( x_3)=- \frac{m \tilde{N}}{c_1 c_2} x_3^2+G_2(x_2^{0})+G_1(x_1^{0}).
\end{align*}
Now we  apply the second derivative bound in  \eqref{final j}.
 We obtain that
 \begin{equation} \label{bound j m }
\mathcal{J}(m) \ll (tK)^{-1}. 
\end{equation}

We now consider the  character sum 
\begin{align*}
\mathcal{C}(m):=\sum_{\beta(c_{1}c_{2})}e\left(\frac{-\beta \overline{n_{1}}}{c_{1}} +\frac{\beta \overline{n_{2}}}{c_{2}}+\frac{m\beta}{c_{1}c_{2}} \right)  =c_{1}c_{2} \mathbbm{1}(\overline{n}_1 c_2 - \overline{n}_2 c_1 \equiv m (c_1 c_2)). 
\end{align*}

Substituting the evaluation of character sum in \eqref{S_6}, we obtain 
\begin{align*}
S_{6}=\tilde{N}\sum_{m\ll \frac{N}{K^{2}}} \mathbbm{1}(\overline{n}_1 c_2 - \overline{n}_2 c_1 \equiv m (c_1 c_2)) \mathcal{J}(m).
\end{align*}
Substituting the above estimate in \eqref{O 2 star}, we obtain
\begin{align*}
\mathcal{O}_2^\star =\tilde{N} \mathop{\sum \sum}_{c_{1},c_{2} \sim Q} \frac{1}{c_1 c_2}  \sum_{n_{1} \sim \frac{c_1 t}{N}} \sum_{n_{2} \sim \frac{c_2t}{N}} \sum_{m\sim \frac{N}{K^{2}}} \mathbbm{1}(\overline{n}_1 c_2 - \overline{n}_2 c_1 \equiv m (c_1 c_2)) \mathcal{J}(m).
\end{align*} Using  equation \eqref{bound j 0}, We observe that 
contribution of the diagonal term (when $c_1 = c_2$ and $n_1 = n_2$) is bounded from above by 

\begin{align} \label{diagonal}
\mathcal{O}_2^\star (d) =\tilde{N} \sum_{c \sim Q} \frac{1}{c^2}  \sum_{n \sim \frac{c t}{N}} |\mathcal{J}(0)| \ll \frac{ \tilde{N} }{N}. 
\end{align} 
Similarly using equation \eqref{bound j m }, contribution of the non-diagonal terms are bounded from above by 
\begin{align} \label{non diagonal}
\mathcal{O}_2^\star(nd) &=\tilde{N} \mathop{\sum \sum}_{c_{1},c_{2} \sim Q}  \sum_{n_1 \sim \frac{c_1 t}{N}} \sum_{n_{2} \sim \frac{c_2 t}{N}} \frac{1}{c_1 c_2}\sum_{m\sim \frac{N}{K^2}} \frac{1}{c_1 c_2} |\mathcal{J}(m)| \notag \\
& \ll \tilde{N} \mathop{\sum \sum}_{c_{1},c_{2} \sim Q} \times \frac{c_1 t}{N} \frac{c_2 t}{N} \frac{1}{c_1 c_2} \frac{N}{K^2} \frac{1}{c_1 c_2} \frac{1}{t K} \ll \frac{\tilde{N} t }{NK^3}. 
\end{align} Using  bounds of equations \eqref{diagonal} and \eqref{non diagonal} in equation \eqref{definition O 2 star},  we obtain

\begin{align*} 
\mathcal{O^{*}} \ll  N^{3/2}   \left(\frac{\tilde{N} }{N} + \frac{\tilde{N} t }{NK^3}\right)^\frac{1}{2} \ll  N \sqrt{\tilde{N}}  \left(1   + \frac{\sqrt{t}}{K^{3/2}}\right) \ll \sqrt{N} Q K^2 \left(1   + \frac{\sqrt{t}}{K^{3/2}}\right). 
\end{align*} This prove our Proposition \ref{proposition o star}.

 {\bf Acknowledgement:} Authors are grateful to Prof. Ritabrata Munshi as most of this paper is based on the ideas that he shared with us. Authors would also like to thank Prof. Satadal Ganguly for useful suggestions and comments.  Author would also like to thank  Stat-Math unit, Indian Statistical Institute, Kolkata for the wonderful academic atmosphere. During the work,  S. Singh was supported by the Department of Atomic Energy, Government of India, NBHM post doctoral fellowship no: 2/40(15)/2016/R$\&$D-II/5765.

{}
\end{document}